\newtheorem{theorem}{Theorem}[]
\newtheorem{corollary}[theorem]{Corollary}
\theoremstyle{definition}
\newtheorem{definition}[theorem]{Definition}
\theoremstyle{remark}
\newtheorem{example}[theorem]{Example}
\numberwithin{equation}{section}
\numberwithin{theorem}{section}
\begin{document}
\begin{CJK*}{GBK}{song}

\title{On mixed super quasi-Einstein manifolds with Ricci-Bourguignon solitons}

\author[J. H. Yan]{Junhao Yan}
\address{Junhao Yan \\ School of Mathematics and Physics \\ Guangxi Minzu University \\ Guangxi Nanning 530006, China}
\email{1440116496@qq.com}

\author[R. Bi]{Ran Bi}
\address{Ran Bi \\ School of Mathematics and Physics \\ Guangxi Minzu University \\ Guangxi Nanning 530006, China}
\email{596915342@qq.com}

\author[W. J. Lu]{Weijun Lu}
\address{Weijun Lu \\ School of Mathematics and Physics \\ Guangxi Minzu University \\ Guangxi Nanning 530006, China}
\email{weijunlu2008@126.com}

\maketitle
\vspace{-0.3cm}

\begin{abstract}
This paper delves into the study of mixed super quasi-Einstein manifolds of dimension $n$ (for short, ${\rm M^{n}_{SQE}}$), focusing on their geometric and physical attributes. Initially, we explore several properties of ${\rm M^{n}_{SQE}}$,  including conformal Ricci pseudosymmetry, Einstein's field equation, and the space-matter tensor.  Subsequently, we characterize mixed super quasi-Einstein manifolds that admit Ricci-Bourguignon solitons.  We establish that if the generator vector field is torse-forming, the manifold reduces to a pseudo generalized quasi-Einstein manifold, and we provide a detailed characterization of the eigenvalue problem associated with the symmetric tensor. To substantiate our findings,  we construct an example to illustrate the existence of mixed super quasi-Einstein spacetime.
\end{abstract}

\footnotetext{2020 Mathematics Subject Classification: 53C15, 53C25, 53C35}
\footnotetext{Keywords:\ Mixed super quasi-Einstein manifolds, conformal Ricci pseudosymmetry, Einstein's field equation, space-matter tensor, Ricci-Bourguignon solitons}

\section{Introduction}\par	
A Riemannian  manifold $(\mathcal{M}^{n},g)$ with dimension $n$ ($\geq2$) is said to be an Einstein manifold if the following condition
\begin{equation}\label{1.1}
\begin{split}
{\rm Ric}=\displaystyle\frac{r}{n}g
\end{split}
\end{equation}
holds on $\mathcal{M}$, where ${\rm Ric}$ and $r$  denote Ricci tensor and scalar curvature, respectively. Here (\ref{1.1}) is called the
Einstein metric condition. Einstein manifolds were named after Albert Einstein because this condition is equivalent to saying that the metric is a solution of the vacuum Einstein field equation with a cosmological constant. Therefore, they play an important role in  general relativity.

Chaki and Maity\cite{ref4} introduced the notion of quasi-Einstein manifold which is a generalization of Einstein manifold. A non-flat Riemannian manifold  $(\mathcal{M}^{n},g)$ $(n\geq3)$ is said to be a quasi-Einstein manifold if its Ricci tensor ${\rm Ric}$ of type $(0,2)$ is not identically zero with satisfying the following condition
\begin{equation}
\begin{split}
{\rm Ric}({ X},{Y})=\Psi_{1}g({ X},{ Y})+\Psi_{2}\mathcal{A}({ X})\mathcal{A}({Y}),
\end{split}
\end{equation}
where $\Psi_{1}$, $\Psi_{2}$ are scalars and $\Psi_{2}\neq0$. The vector field $\xi_{1}$ is called the generator of the manifold and $\mathcal{A}$ is a nonzero 1-form such that
\begin{equation*}
\begin{split}
g({ X},\xi_{1})=\mathcal{A}({ X}), \quad g(\xi_{1},\xi_{1})=\mathcal{A}(\xi_{1})=1,
\end{split}
\end{equation*}
for all  $X$ $\in {{C}^{\infty }}(TM)$. Such an $n$-dimensional manifold is simply denoted by ${\rm E^{n}_{Q}}$.

In general relativity, quasi-Einstein manifold can serve as a model for the perfect fluid spacetime\cite{ref5}. Therefore, it is of some importance in general relativity. Several geometric researchers have contributed to the development of quasi-Einstein manifolds by introducing various generalizations such as generalized quasi-Einstein manifolds\cite{ref2,ref8}, nearly quasi-Einstein manifolds\cite{ref7}, pseudo quasi-Einstein manifolds\cite{ref26}, pseudo generalized quasi-Einstein manifolds\cite{ref27}, super quasi-Einstein manifolds\cite{ref3}, mixed quasi-Einstein manifolds\cite{ref18}, mixed super quasi-Einstein manifolds\cite{ref1}, extended quasi-Einstein manifolds\cite{ref14} and so on.

One of the generalizations is the mixed super quasi-Einstein manifold introduced by Bhattacharyya, Tarafdar, and Debnath\cite{ref1}.
\begin{definition}
 A non-flat Riemannian manifold $(\mathcal{M}^{n},g)$ $(n\geq3)$ is called a mixed super quasi-Einstein manifold if its Ricci tensor Ric of type $(0,2)$ is not identically zero and satisfies the following condition
\begin{equation}\label{1.3}
\begin{split}
{\rm Ric}(X,Y)=&\Psi_{1}g(X,Y)+\Psi_{2}\mathcal{A}(X)\mathcal{A}(Y)+\Psi_{3}\mathcal{B}(X)\mathcal{B}(Y)\\
&+\Psi_{4}\big(\mathcal{A}(X)\mathcal{B}(Y)+\mathcal{B}(X)\mathcal{A}(Y)\big)+\Psi_{5}\mathcal{D}(X,Y),
\end{split}
\end{equation}
where $\Psi_{1}$, $\Psi_{2}$, $\Psi_{3}$, $\Psi_{4}$, $\Psi_{5}$ are scalars and $\Psi_{2}\neq0$, $\Psi_{3}\neq0$, $\Psi_{4}\neq0$, $\Psi_{5}\neq0$. The vector fields $\xi_{1}$ and $\xi_{2}$ are called the generators of the manifold. The associated 1-forms $\mathcal{A}$ and $\mathcal{B}$ are nonzero 1-forms satisfying
\begin{equation*}
\begin{split}
g({ X},\xi_{1})=\mathcal{A}({ X}),\quad g({ X},\xi_{2})=\mathcal{B}({ X}),  \quad g(\xi_{1},\xi_{2})=\mathcal{A}(\xi_{2})=\mathcal{B}({\xi_{1}})=0,\\
     g(\xi_{2},\xi_{2})=\mathcal{B}({\xi_{2}})=1, \quad g(\xi_{1},\xi_{1})=\mathcal{A}(\xi_{1})=1.
\end{split}
\end{equation*}
Furthermore, $\mathcal{D}$ is a symmetric tensor of type $(0,2)$ with trace-free, satisfying $\mathcal{D}(X,\xi_{1})=0$.
\end{definition}
If $\Psi_{4}$=$0$, then the manifold can reduce to a pseudo generalized quasi-Einstein manifold.

A Riemannian manifold $(\mathcal{M}^{n},g)$ ($ n\geq3$) is said to be a manifold of mixed super quasi-constant curvature if the curvature tensor ${\rm R}$  of type $(0,4)$ follows the condition
\begin{equation*}
\begin{split}
{\rm R}(X,Y,Z,W)=&\Phi_{1}\big(g(X,W)g(Y,Z)-g(X,Z)g(Y,W)\big)\\
&+\Phi_{2}\big(g(X,W)\mathcal{A}(Y)\mathcal{A}(Z)-g(X,Z)\mathcal{A}(Y)\mathcal{A}(W)\\
&+g(Y,Z)\mathcal{A}(X)\mathcal{A}(W)-g(Y,W)\mathcal{A}(X)\mathcal{A}(Z)\big)\\
&+\Phi_{2}\big(g(X,W)\mathcal{B}(Y)\mathcal{A}(Z)-g(X,Z)\mathcal{B}(Y)\mathcal{A}(W)\\
&+g(Y,Z)\mathcal{B}(X)\mathcal{A}(W)-g(Y,W)\mathcal{A}(X)\mathcal{B}(Z)\big)\\
&+\Phi_{4}\big[g(X,W)\big(\mathcal{A}(Y)\mathcal{B}(Z)+\mathcal{A}(Z)\mathcal{B}(Y)\big)\\
&-g(X,Z)\big(\mathcal{A}(Y)\mathcal{B}(W)+\mathcal{A}(W)\mathcal{B}(Y)\big)\\
&+g(Y,Z)\big(\mathcal{A}(X)\mathcal{B}(W)+\mathcal{A}(W)\mathcal{B}(X)\big)\\
&-g(Y,W)\big(\mathcal{A}(X)\mathcal{B}(Z)+\mathcal{A}(Z)\mathcal{B}(X)\big)\big]\\
&+\Phi_{5}\big(g(X,W)\mathcal{D}(Y,Z)-g(X,Z)\mathcal{D}(Y,W)\\
&+g(Y,Z)\mathcal{D}(X,W)-g(Y,W)\mathcal{D}(X,Z)\big),
\end{split}
\end{equation*}
where $\Phi_{1}, \Phi_{2}, \Phi_{3}, \Phi_{4}, \Phi_{5}$ are scalars, $\mathcal{A}$, $\mathcal{B}$ are two nonzero 1-forms and $\mathcal{D}$ is a symmetric tensor of type $(0,2)$. Such a manifold is simply denoted by ${\rm M^{n}_{SQC}}$.

In 2016, Dey, Pal and Bhattacharyya\cite{ref10} studied the mixed super quasi-Einstein manifolds satisfying some curvature conditions, such as ${\rm C\cdot Ric}$=$0$ and ${\rm \overline{C}\cdot Ric}$=$0$, where ${\rm C}$ is the conformal curvature tensor and ${\rm \overline{C}}$ is  the concircular curvature tensor. They
got the following theorem.
\begin{theorem} \cite[Theorem 4]{ref10}.
Let $(\mathcal{M}^{n},g)$  be a mixed super quasi-Einstein manifold. If the condition $ {\overline{C}\cdot Ric}$=$0$, then the curvature tensor $R$ satisfies
\begin{equation*}
\begin{split}
{\rm R}(X,Y,\xi_{1},\xi_{2})=\displaystyle\frac{r}{n(n-1)}\big(\mathcal{A}(Y)\mathcal{B}(X)-\mathcal{A}(X)\mathcal{B}(Y)\big).
\end{split}
\end{equation*}
\end{theorem}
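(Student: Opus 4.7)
The plan is to unpack the condition $\overline{C}\cdot\mathrm{Ric}=0$ as a tensor equation and then probe it by feeding in the generators $\xi_1,\xi_2$, which is the natural way to isolate the quantity $R(X,Y,\xi_1,\xi_2)=g(R(X,Y)\xi_1,\xi_2)$. Recall that the concircular curvature tensor acts on a $(0,2)$-tensor by
\begin{equation*}
(\overline{C}(X,Y)\cdot\mathrm{Ric})(U,V)=-\mathrm{Ric}(\overline{C}(X,Y)U,V)-\mathrm{Ric}(U,\overline{C}(X,Y)V),
\end{equation*}
so the hypothesis forces the sum of the two right-hand terms to vanish identically in $X,Y,U,V$.

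The concrete steps I would follow are as follows. First, write the concircular tensor in the customary form $\overline{C}(X,Y)Z=R(X,Y)Z-\tfrac{r}{n(n-1)}\bigl(g(Y,Z)X-g(X,Z)Y\bigr)$ and specialize $Z=\xi_1$ and $Z=\xi_2$, which produces the ``local'' expressions $\overline{C}(X,Y)\xi_1=R(X,Y)\xi_1-\tfrac{r}{n(n-1)}(\mathcal A(Y)X-\mathcal A(X)Y)$ and the analogous identity with $\mathcal B$ for $\xi_2$. Second, from the defining relation (\ref{1.3}) record the evaluations
\begin{equation*}
\mathrm{Ric}(Z,\xi_1)=(\Psi_1+\Psi_2)\mathcal A(Z)+\Psi_4\mathcal B(Z),\qquad \mathrm{Ric}(Z,\xi_2)=(\Psi_1+\Psi_3)\mathcal B(Z)+\Psi_4\mathcal A(Z)+\Psi_5\mathcal D(Z,\xi_2),
\end{equation*}
using the normalizations of $\mathcal A,\mathcal B$ on $\xi_1,\xi_2$ together with $\mathcal D(X,\xi_1)=0$. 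Third, set $U=\xi_1$, $V=\xi_2$ in the derivation formula, substitute the above, and exploit the Riemann symmetries $R(X,Y,\xi_1,\xi_1)=R(X,Y,\xi_2,\xi_2)=0$ and $R(X,Y,\xi_2,\xi_1)=-R(X,Y,\xi_1,\xi_2)$ to make $g(R(X,Y)\xi_1,\xi_2)$ and $g(R(X,Y)\xi_2,\xi_1)$ collapse into a single multiple of $R(X,Y,\xi_1,\xi_2)$. Finally, the surviving algebraic combination on the right-hand side assembles itself into the antisymmetric $1$-form product $\mathcal A(Y)\mathcal B(X)-\mathcal A(X)\mathcal B(Y)$ scaled by $\tfrac{r}{n(n-1)}$, giving the claimed identity.

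The principal obstacle is bookkeeping: after substitution, there are many terms of mixed type ($\mathcal A\mathcal A$, $\mathcal A\mathcal B$, $\mathcal B\mathcal B$, and $\mathcal D$-products), and the derivation must be arranged so that the coefficients $\Psi_1,\Psi_2,\Psi_3,\Psi_4$ that accompany $R(X,Y,\xi_1,\xi_2)$ on the left match the coefficients of $\mathcal A(Y)\mathcal B(X)-\mathcal A(X)\mathcal B(Y)$ on the right, so that they factor out cleanly. The cross terms in $\mathcal A\mathcal A$ and $\mathcal B\mathcal B$ must be shown to cancel by the antisymmetry $X\leftrightarrow Y$ built into the concircular expansion, and the $\mathcal D$-contributions (which a priori involve $\mathcal D(R(X,Y)\xi_1,\xi_2)$ and $\mathcal D(X,\xi_2)$) must be shown to cancel against each other; this last cancellation is the delicate point and is where the symmetry and trace-freeness of $\mathcal D$, together with $\mathcal D(\cdot,\xi_1)=0$, should be used decisively. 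Once these cancellations are verified, dividing by the nonvanishing overall factor yields the stated formula for $R(X,Y,\xi_1,\xi_2)$.
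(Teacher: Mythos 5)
The paper itself gives no proof of this statement --- it is quoted verbatim from \cite[Theorem 4]{ref10} --- so the only internal point of comparison is the proof of the conformal analogue (Theorem 2.4 in Section 2.3), where the decisive move is the substitution $Z=W=\xi_{1}$. Measured against that, your overall strategy (expand $\overline{C}\cdot{\rm Ric}=0$ as $\mathrm{Ric}(\overline{C}(X,Y)U,V)+\mathrm{Ric}(U,\overline{C}(X,Y)V)=0$ and feed in generators) is right, but your choice of slots $U=\xi_{1}$, $V=\xi_{2}$ does not work. Carrying out that substitution with $\mathrm{Ric}(Z,\xi_{1})=(\Psi_{1}+\Psi_{2})\mathcal{A}(Z)+\Psi_{4}\mathcal{B}(Z)$ and $\mathrm{Ric}(Z,\xi_{2})=(\Psi_{1}+\Psi_{3})\mathcal{B}(Z)+\Psi_{4}\mathcal{A}(Z)+\Psi_{5}\mathcal{D}(Z,\xi_{2})$ yields
\begin{equation*}
(\Psi_{3}-\Psi_{2})\,\overline{C}(X,Y,\xi_{1},\xi_{2})+\Psi_{5}\,\mathcal{D}\bigl(\overline{C}(X,Y)\xi_{1},\xi_{2}\bigr)=0 .
\end{equation*}
The term $\mathcal{D}(\overline{C}(X,Y)\xi_{1},\xi_{2})$ that you hope will ``cancel against'' something has nothing to cancel against: the hypotheses on $\mathcal{D}$ (symmetry, trace-freeness, and $\mathcal{D}(\cdot,\xi_{1})=0$) say nothing about $\mathcal{D}(W,\xi_{2})$ for a general vector $W=\overline{C}(X,Y)\xi_{1}$. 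Moreover the surviving coefficient $\Psi_{3}-\Psi_{2}$ is not guaranteed to be nonzero, so even if the $\mathcal{D}$-term were absent you could not divide by it. This is exactly the ``delicate point'' you flagged, and it is a genuine gap, not mere bookkeeping.

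The fix is the substitution $U=V=\xi_{1}$. Then the derivation condition collapses to $2\,\mathrm{Ric}(\overline{C}(X,Y)\xi_{1},\xi_{1})=0$, and since $\mathrm{Ric}(W,\xi_{1})=(\Psi_{1}+\Psi_{2})\mathcal{A}(W)+\Psi_{4}\mathcal{B}(W)$ (the $\mathcal{D}$-term is killed outright by $\mathcal{D}(\cdot,\xi_{1})=0$) and $\mathcal{A}(\overline{C}(X,Y)\xi_{1})=\overline{C}(X,Y,\xi_{1},\xi_{1})=0$ by antisymmetry in the last two slots, one is left with $2\Psi_{4}\,\overline{C}(X,Y,\xi_{1},\xi_{2})=0$. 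The standing hypothesis $\Psi_{4}\neq 0$ then gives $\overline{C}(X,Y,\xi_{1},\xi_{2})=0$, and expanding $\overline{C}(X,Y,\xi_{1},\xi_{2})={\rm R}(X,Y,\xi_{1},\xi_{2})-\tfrac{r}{n(n-1)}\bigl(\mathcal{A}(Y)\mathcal{B}(X)-\mathcal{A}(X)\mathcal{B}(Y)\bigr)$ produces the stated identity. Note that this argument, unlike yours, makes essential use of $\Psi_{4}\neq 0$; that is the structural reason the ``mixed'' term in the definition matters here.
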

In 2020, Debnath and Basu\cite{ref9} discussed some Ricci conditions on the mixed super quasi-Einstein manifolds, such as Ricci semisymmetry and Ricci pseudosymmetry. Several geometrical properties of mixed super quasi-Einstein manifolds have been studied by Fu, Han and Zhao\cite{ref12} and Patra and Patra\cite{ref20}. In 2024, Vasiulla, Ali and U\"{n}al\cite{ref29} investigated the sufficient condition for a mixed super quasi-Einstein manifold to be a manifold of mixed super quasi-constant curvature and obtained the following theorem.
\begin{theorem}
\cite[Theorem 1]{ref29}.
In a Riemannian manifold $(\mathcal{M}^{n},g)$, every quasi-conformally flat mixed super quasi-Einstein manifold is a manifold of mixed super quasi-constant curvature.
\end{theorem}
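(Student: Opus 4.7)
The plan is to exploit the algebraic identity implied by quasi-conformal flatness. Recall that the quasi-conformal curvature tensor has the shape
\begin{equation*}
\tilde{C}(X,Y,Z,W) = a\,R(X,Y,Z,W) + b\bigl[\Ric(Y,Z)g(X,W) - \Ric(X,Z)g(Y,W) + g(Y,Z)\Ric(X,W) - g(X,Z)\Ric(Y,W)\bigr] - \frac{r}{n}\Bigl[\tfrac{a}{n-1}+2b\Bigr]\bigl[g(Y,Z)g(X,W) - g(X,Z)g(Y,W)\bigr],
\end{equation*}
for real constants $a,b$ not simultaneously zero, with $a\neq 0$. Setting $\tilde C \equiv 0$ and solving algebraically for $R$ expresses the Riemannian curvature tensor as a linear combination of $g\otimes g$ and $g\otimes\Ric$ pieces antisymmetrised in the two pair-slots.

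Next I would substitute the defining Ricci decomposition (\ref{1.3}) into this expression for $R$. Each of the five summands $\Psi_1 g$, $\Psi_2\mathcal{A}\otimes\mathcal{A}$, $\Psi_3\mathcal{B}\otimes\mathcal{B}$, $\Psi_4(\mathcal{A}\otimes\mathcal{B}+\mathcal{B}\otimes\mathcal{A})$, $\Psi_5\mathcal{D}$ enters each of the four Ricci-slots of the quasi-conformal identity. The $\Psi_1 g$ contribution merges with the scalar-curvature bracket into a pure $g\otimes g$ term, while each of the remaining four $\Psi_i$ summands produces a quadruple of $g\otimes(\,\cdot\,)$ terms whose sign pattern is exactly the antisymmetrisation in $(X,Y)$ and $(Z,W)$ appearing in the ${\rm M^{n}_{SQC}}$ template.

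The last step is to read off coefficients and match them against the template. A direct comparison yields
\begin{equation*}
\Phi_1 = \frac{1}{a}\!\left(\frac{r}{n}\Bigl[\tfrac{a}{n-1}+2b\Bigr] - 2b\,\Psi_1\right), \qquad \Phi_i = -\frac{b}{a}\,\Psi_i \quad (i=2,3,4,5),
\end{equation*}
so $R$ acquires precisely the form postulated for a manifold of mixed super quasi-constant curvature, proving the theorem.

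The principal obstacle is the combinatorial bookkeeping: substitution produces $4\times 5 = 20$ summands that must be reorganised into the five four-term packets of the ${\rm M^{n}_{SQC}}$ definition. Particular care is required for the mixed $\Psi_4$ contribution, which must be split and repaired so that the symmetrised combination $\mathcal{A}\otimes\mathcal{B}+\mathcal{B}\otimes\mathcal{A}$ appears consistently in every antisymmetrised position, and for the $\Psi_5\mathcal{D}$ contribution, where the symmetry $\mathcal{D}(X,Y)=\mathcal{D}(Y,X)$ is exactly what allows the four generated terms to collapse into the single $\Phi_5$-packet of the template.
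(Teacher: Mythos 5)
Your proposal is correct, and the coefficient identifications check out: solving $\tilde{C}=0$ for ${\rm R}$ (with $a\neq 0$) and substituting (\ref{1.3}) into the four Ricci slots does produce exactly the five antisymmetrised packets of the ${\rm M^{n}_{SQC}}$ template, with $\Phi_{1}=\frac{1}{a}\bigl(\frac{r}{n}\bigl[\frac{a}{n-1}+2b\bigr]-2b\Psi_{1}\bigr)$ and $\Phi_{i}=-\frac{b}{a}\Psi_{i}$ for $i=2,\dots,5$; the symmetry of $\mathcal{D}$ and of the $\Psi_{4}$ block is indeed what makes the twenty summands collapse correctly. One point worth making explicit: the hypothesis $a\neq 0$ is genuinely needed (if $a=0$ and $b\neq 0$, quasi-conformal flatness constrains ${\rm Ric}$ rather than determining ${\rm R}$), so it should be stated as part of the theorem or absorbed into the definition of the quasi-conformal tensor.

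Be aware, however, that the paper does not actually prove this statement --- it is quoted verbatim from \cite[Theorem 1]{ref29} in the introduction as background. The closest argument the paper does carry out is the proof of the space-matter-tensor analogue in Section 3.2 (the theorem cited as \cite[Theorem 10]{ref29}), where ${\rm P}=0$ together with Einstein's field equation (\ref{3.1}) is used to express ${\rm R}$ as a combination of $g\wedge{\rm T}$ and ${\rm G}$, after which ${\rm T}$ is replaced by the ${\rm M^{n}_{SQE}}$ data via (\ref{3.12}) and the coefficients $f_{1},\dots,f_{5}$ are read off. Your argument is the same ``express ${\rm R}$ as a Kulkarni--Nomizu-type product of $g$ with a symmetric $(0,2)$-tensor, substitute the Ricci decomposition, and match coefficients'' strategy, just driven by quasi-conformal flatness instead of the vanishing of the space-matter tensor; structurally the two proofs are interchangeable, differing only in which algebraic identity supplies the initial expression for ${\rm R}$.
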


Based on  the above work, we study  the geometric and physical properties of mixed super quasi-Einstein manifolds. To generalize the work of Dey, Pal and Bhattacharyya on curvature conditions, we introduce the notion of conformal Ricci pseudosymmetric manifold. Furthermore, by extending the work of Huang, Zhou and Lu\cite{ref15} on several geometric properties of mixed quasi-Einstein manifolds with soliton structures, we consider the case of the Ricci-Bourguignon solitons. Within this framework, we derive the following results.
\begin{theorem}
Let a  mixed super quasi-Einstein manifold be conformal Ricci pseudosymmetric.
\begin{itemize}
	\item If $m$ is not an eigenvalue of the tensor $\mathcal{D}$ corresponding to the eigenvector $\xi_{2}$, then
\begin{equation*}
\begin{split}
{\rm R}(X,Y,\xi_{1},\xi_{2})=\mathcal{E}(X)\mathcal{A}(Y)-\mathcal{A}(X)\mathcal{E}(Y).
\end{split}
\end{equation*}
   \item  If $m$ is an eigenvalue of the tensor $\mathcal{D}$ corresponding to the eigenvector $\xi_{2}$, then
\begin{equation*}
\begin{split}
{\rm R}(X,Y,\xi_{1},\xi_{2})=0.
\end{split}
\end{equation*}
Where $m=-\displaystyle\frac{{\rm R}(\xi_{2},\xi_{1},\xi_{1},\xi_{2})(n-2)}{\Psi_{5}}+\mathcal{D}(\xi_{2},\xi_{2}).$
\end{itemize}
\end{theorem}
\begin{theorem}
Let $(\mathcal{M}^{n},g)$  be a  mixed super quasi-Einstein manifold  with Ricci-Bourguignon soliton $(g,\xi_{1},\lambda,\rho)$. If the generator $\xi_{1}$  is torse-forming vector field, then the  manifold $\mathcal{M}$ is a pseudo generalized
quasi-Einstein manifold and $\mathcal{D}(\xi_{2},\xi_{2})$ is an eigenvalue of  $\mathcal{D}$ corresponding to the eigenvector $\xi_{2}$.
\end{theorem}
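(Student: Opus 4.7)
The plan is to reduce the Ricci-Bourguignon soliton equation, evaluated with the torse-forming generator $\xi_{1}$, to a tensorial identity on the manifold and then equate it with (\ref{1.3}) to extract pointwise information by testing against the distinguished frame vectors $\xi_{1},\xi_{2}$. First, because $g(\xi_{1},\xi_{1})=1$, the torse-forming condition on $\xi_{1}$ can be normalized to
\begin{equation*}
\nabla_{X}\xi_{1}=\alpha\bigl(X-\mathcal{A}(X)\xi_{1}\bigr),
\end{equation*}
which gives $(\mathcal{L}_{\xi_{1}}g)(X,Y)=2\alpha\bigl(g(X,Y)-\mathcal{A}(X)\mathcal{A}(Y)\bigr)$. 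Plugging this into the Ricci-Bourguignon soliton equation $\mathrm{Ric}+\tfrac{1}{2}\mathcal{L}_{\xi_{1}}g=(\lambda+\rho r)g$ expresses $\mathrm{Ric}$ as a linear combination of $g$ and $\mathcal{A}\otimes\mathcal{A}$. Equating this with the mixed super quasi-Einstein decomposition (\ref{1.3}) yields a single tensor identity in the five generating tensors $g,\mathcal{A}\otimes\mathcal{A},\mathcal{B}\otimes\mathcal{B},\mathcal{A}\otimes\mathcal{B}+\mathcal{B}\otimes\mathcal{A},\mathcal{D}$.

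Next, I would evaluate this identity at $(\xi_{1},\xi_{2})$. All terms on the reduced right-hand side vanish because $g(\xi_{1},\xi_{2})=0$ and $\mathcal{A}(\xi_{2})=0$. On the left, the $g$-, $\mathcal{A}\otimes\mathcal{A}$- and $\mathcal{B}\otimes\mathcal{B}$-terms vanish for the same reason together with $\mathcal{B}(\xi_{1})=0$; the $\Psi_{5}\mathcal{D}$-term vanishes thanks to the defining assumption $\mathcal{D}(X,\xi_{1})=0$ (used with the symmetry of $\mathcal{D}$); and the $\Psi_{4}$-term reduces to $\Psi_{4}\bigl(\mathcal{A}(\xi_{1})\mathcal{B}(\xi_{2})+\mathcal{B}(\xi_{1})\mathcal{A}(\xi_{2})\bigr)=\Psi_{4}$. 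Hence $\Psi_{4}=0$, which is precisely the condition recalled after Definition 1 for the manifold to be a pseudo generalized quasi-Einstein manifold.

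For the eigenvalue statement, I would extract the remaining scalar data by evaluating the reduced identity at $(\xi_{1},\xi_{1})$ and $(\xi_{2},\xi_{2})$ to obtain two linear relations among $\Psi_{1},\Psi_{2},\Psi_{3},\alpha,\lambda,\rho r$ and $\Psi_{5}\mathcal{D}(\xi_{2},\xi_{2})$. Substituting these relations back and then testing the identity at $(\xi_{2},Y)$ for an arbitrary vector field $Y$, the $g(\xi_{2},Y)=\mathcal{B}(Y)$ coefficients collapse so that the surviving equation reads
\begin{equation*}
\Psi_{5}\bigl[\mathcal{D}(\xi_{2},Y)-\mathcal{D}(\xi_{2},\xi_{2})\,g(\xi_{2},Y)\bigr]=0.
\end{equation*}
Because $\Psi_{5}\neq 0$ by the definition of a mixed super quasi-Einstein manifold, this forces $\widetilde{\mathcal{D}}\xi_{2}=\mathcal{D}(\xi_{2},\xi_{2})\,\xi_{2}$, where $\widetilde{\mathcal{D}}$ is the $(1,1)$-tensor metrically dual to $\mathcal{D}$, giving the eigenvalue claim.

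The main obstacle I anticipate is not the algebra itself but making sure the cancellation that kills the $\mathcal{D}$-coefficient in the last step is clean: it relies on a precise use of the two relations coming from $(\xi_{1},\xi_{1})$ and $(\xi_{2},\xi_{2})$, and any careless mixing with the $\alpha$- and $(\lambda+\rho r)$-terms will leave a spurious scalar multiple of $\mathcal{B}(Y)$. Keeping the trace-free condition on $\mathcal{D}$ available as a sanity check (it becomes useful if one additionally wants to compute $\mathcal{D}(\xi_{2},\xi_{2})$ explicitly in terms of the $\Psi_{i}$) should guide the bookkeeping through this step.
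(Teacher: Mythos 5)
Your proposal is correct and follows essentially the same route as the paper: normalize the torse-forming condition to $\nabla_{X}\xi_{1}=f\bigl(X-\mathcal{A}(X)\xi_{1}\bigr)$ using $g(\xi_{1},\xi_{1})=1$, substitute into the soliton equation, extract $\Psi_{4}=0$ by testing against $(\xi_{1},\xi_{2})$, and then use the scalar relations from $(\xi_{1},\xi_{1})$ and $(\xi_{2},\xi_{2})$ to reduce the $(\xi_{2},Y)$ evaluation to $\mathcal{D}(\xi_{2},Y)=\mathcal{D}(\xi_{2},\xi_{2})\,g(\xi_{2},Y)$. The only cosmetic difference is that you compute $\mathcal{L}_{\xi_{1}}g$ in closed form up front, whereas the paper carries the symmetrized covariant derivative through and solves for $f$ explicitly; both yield the same cancellation.
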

This paper is organized as follows.

In Sec. 2, we study ${\rm M^{n}_{SQE}}$  with the generator $\xi_{1}$  as concircular  vector field. After that, we consider generalized Ricci recurrent and conformal Ricci pseudosymmetric manifold. In Sec. 3,  we discuss about  ${\rm M^{4}_{SQE}}$ spacetime satisfying Einstein field equation and  space-matter tensor, respectively. In Sec. 4, we consider Ricci-Bourguignon solitons on ${\rm M^{n}_{SQE}}$. Lastly, we construct  a non-trivial example of  ${\rm M^{4}_{SQE}}$ spacetime.

\section{Some Geometric Properties }

Let \{$e_{i}: i=1,2,...,n$\} be an orthonormal basis of the tangent space at any point of the manifold. Then putting $X=Y=e_{i}$ in (\ref{1.3}) and taking summation over $i$, $1\leq i\leq n$, we get
\begin{equation}\label{2.1}
\begin{split}
r=n\Psi_{1}+\Psi_{2}+\Psi_{3}.
\end{split}
\end{equation}

Next, we will consider some geometric properties of mixed super quasi-Einstein manifolds.

\subsection{The generator as concircular vector field}\

The generator $\xi_{1}$  is said to be concircular vector field if it satisfies\cite{ref25}
\begin{equation}\label{2.2}
\begin{split}
\nabla_{X}\xi_{1}=\mu X.
\end{split}
\end{equation}

Let us consider a mixed super quasi-Einstein manifold with concircular vector field  $\xi_{1}$. Hence, we can get the following theorem.
\begin{theorem}
Let the generator $\xi_{1}$ of a mixed super quasi-Einstein manifold be concircular vector field and the associated scalars be constants. Then the associated 1-form $\mathcal{A}$ is closed if and only if the associated 1-form $\mathcal{B}$ is closed, provide $\Psi_{1}+\Psi_{2}\neq 0$
\end{theorem}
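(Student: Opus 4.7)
The plan is to derive a single linear relation between the 1-forms $d\mathcal{A}$ and $d\mathcal{B}$ and then invoke the non-vanishing hypotheses to read off the biconditional. The key observation is that the concircular condition (\ref{2.2}) forces the 1-form $d\mu$ to be expressible as a linear combination of $\mathcal{A}$ and $\mathcal{B}$; applying the exterior derivative (and using $d^{2}=0$) will then give a linear relation between $d\mathcal{A}$ and $d\mathcal{B}$.

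First I would compute the action of the curvature tensor on $\xi_{1}$. Using $\nabla_{X}\xi_{1}=\mu X$ and torsion-freeness one obtains
\begin{equation*}
R(X,Y)\xi_{1}=\nabla_{X}\nabla_{Y}\xi_{1}-\nabla_{Y}\nabla_{X}\xi_{1}-\nabla_{[X,Y]}\xi_{1}=(X\mu)Y-(Y\mu)X.
\end{equation*}
Contracting with an orthonormal frame produces
\begin{equation*}
\operatorname{Ric}(Y,\xi_{1})=-(n-1)\,Y(\mu)=-(n-1)\,d\mu(Y).
\end{equation*}
On the other hand, substituting $Y=\xi_{1}$ into (\ref{1.3}) and using $\mathcal{A}(\xi_{1})=1$, $\mathcal{B}(\xi_{1})=0$, together with the trace-free condition $\mathcal{D}(X,\xi_{1})=0$, I get
\begin{equation*}
\operatorname{Ric}(X,\xi_{1})=(\Psi_{1}+\Psi_{2})\mathcal{A}(X)+\Psi_{4}\mathcal{B}(X).
\end{equation*}
Equating the two expressions yields the 1-form identity
\begin{equation*}
(\Psi_{1}+\Psi_{2})\,\mathcal{A}+\Psi_{4}\,\mathcal{B}=-(n-1)\,d\mu.
\end{equation*}

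The final step is to apply $d$ to both sides. Since $\Psi_{1},\Psi_{2},\Psi_{4}$ are assumed constant and $d^{2}\mu=0$, this collapses to
\begin{equation*}
(\Psi_{1}+\Psi_{2})\,d\mathcal{A}+\Psi_{4}\,d\mathcal{B}=0.
\end{equation*}
Because $\Psi_{4}\neq 0$ by the very definition of a mixed super quasi-Einstein manifold, and $\Psi_{1}+\Psi_{2}\neq 0$ by hypothesis, each of $d\mathcal{A}$ and $d\mathcal{B}$ is a nonzero scalar multiple of the other; hence $d\mathcal{A}=0$ if and only if $d\mathcal{B}=0$, which is the assertion.

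There is no serious obstacle here — the proof is a one-shot computation — but some care is required at two places: (i) the contraction $\operatorname{Ric}(Y,\xi_{1})=-(n-1)Y(\mu)$ depends on the curvature sign convention, and I would verify it in the paper's convention before committing to it; (ii) one must be explicit that $\mathcal{D}(X,\xi_{1})=0$ (from the definition) is what eliminates the $\Psi_{5}$-term in $\operatorname{Ric}(X,\xi_{1})$, since otherwise a spurious $\Psi_{5}\,\mathcal{D}$ contribution would survive and break the clean reduction to $d\mathcal{A},d\mathcal{B}$.
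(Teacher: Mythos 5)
Your proposal is correct and follows essentially the same route as the paper: derive $\operatorname{Ric}(X,\xi_{1})=(1-n)X\mu$ from the concircular condition, equate it with the expression obtained from (\ref{1.3}) at $Y=\xi_{1}$, and then kill $d\mu$ to obtain $(\Psi_{1}+\Psi_{2})\,d\mathcal{A}+\Psi_{4}\,d\mathcal{B}=0$. The only cosmetic difference is that you invoke $d^{2}\mu=0$ directly where the paper writes out the equivalent commutator identity $\nabla_{X}\nabla_{Y}\mu-\nabla_{Y}\nabla_{X}\mu-\nabla_{[X,Y]}\mu=0$.
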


\begin{proof}
Since the generator $\xi_{1}$ is a concircular vector field, then
\begin{equation*}
\begin{split}
{\rm R}(X,Y)\xi_{1}=(X\mu)Y-(Y\mu)X.
\end{split}
\end{equation*}
Consequently, the Ricci curvature satisfies
\begin{equation}\label{2.3}
\begin{split}
{\rm Ric}(X,\xi_{1})=(1-n)(X\mu).
\end{split}
\end{equation}
Putting $Y=\xi_{1}$ in (\ref{1.3}) gives
\begin{equation}\label{2.4}
\begin{split}
{\rm Ric}(X,\xi_{1})=(\Psi_{1}+\Psi_{2})\mathcal{A}(X)+\Psi_{4}\mathcal{B}(X).
\end{split}
\end{equation}
Combining (\ref{2.3}) and (\ref{2.4}), one obtains
\begin{equation}\label{2.5}
\begin{split}
X\mu=\frac{\Psi_{1}+\Psi_{2}}{1-n}\mathcal{A}(X)+\frac{\Psi_{4}}{1-n}\mathcal{B}(X).
\end{split}
\end{equation}
Noting that  the associated scalars are constants, then
\begin{align*}
 \nabla_{X}\nabla_{Y}\mu=&\frac{\Psi_{1}+\Psi_{2}}{1-n}\big((\nabla_{X}\mathcal{A})(Y)+\mathcal{A}(\nabla_{X}Y)\big)\\
&+\frac{\Psi_{4}}{1-n}\big((\nabla_{X}\mathcal{B})(Y)+\mathcal{B}(\nabla_{X}Y)\big),\\
\nabla_{Y}\nabla_{X}\mu=&\frac{\Psi_{1}+\Psi_{2}}{1-n}\big((\nabla_{Y}\mathcal{A})(X)+\mathcal{A}(\nabla_{Y}X)\big)\\
&+\frac{\Psi_{4}}{1-n}\big((\nabla_{Y}\mathcal{B})(X)+\mathcal{B}(\nabla_{Y}X)\big),\\
\nabla_{[X,Y]}\mu=&\frac{\Psi_{1}+\Psi_{2}}{1-n}\mathcal{A}([X,Y])+\frac{\Psi_{4}}{1-n}\mathcal{B}([X,Y]).
\end{align*}
According to
\begin{equation*}
\begin{split}
\nabla_{X}\nabla_{Y}\mu-\nabla_{Y}\nabla_{X}\mu-\nabla_{[X,Y]}\mu=0.
\end{split}
\end{equation*}
Thus
\begin{equation*}
\begin{split}
\frac{\Psi_{1}+\Psi_{2}}{1-n}\big((\nabla_{X}\mathcal{A})(Y)-(\nabla_{Y}\mathcal{A})(X)\big)+\frac{\Psi_{4}}{1-n}\big((\nabla_{X}\mathcal{B})(Y)-(\nabla_{Y}\mathcal{B})(X)\big)=0.
\end{split}
\end{equation*}
i.e.
\begin{equation*}
\begin{split}
(\Psi_{1}+\Psi_{2})d\mathcal{A}(X,Y)+\Psi_{4}d\mathcal{B}(X,Y)=0.
\end{split}
\end{equation*}
Due to $\Psi_{4}\neq 0$,  we have
\begin{equation*}
\begin{split}
\frac{\Psi_{1}+\Psi_{2}}{\Psi_{4}}d\mathcal{A}(X,Y)=-d\mathcal{B}(X,Y).
\end{split}
\end{equation*}
If $\Psi_{1}+\Psi_{2} \neq 0$, it follows that $d\mathcal{A}(X,Y)=0\Leftrightarrow d\mathcal{B}(X,Y)=0$.
\end{proof}

\subsection{Generalized Ricci recurrent ${\rm M^{n}_{SQE}}$ }\

A non-flat Riemannian manifold  $(\mathcal{M}^{n},g)$ $(n\geq3)$ is called a generalized Ricci recurrent manifold if its Ricci tensor ${\rm Ric}$ of type $(0,2)$  satisfies the following condition\cite{ref6}
\begin{equation}\label{2.6}
\begin{split}
(\nabla_{Z}{\rm Ric})(X,Y)=\gamma(Z){\rm Ric}(X,Y)+\delta(Z)g(X,Y),
\end{split}
\end{equation}
where $\gamma$ and $\delta$ are nonzero 1-forms. If $\delta=0$, then the manifold can be reduced to a Ricci recurrent manifold\cite{ref22}.

In this subsection, we consider a  mixed super quasi-Einstein manifold is generalized Ricci recurrent. Therefore, we can obtain the following theorem
\begin{theorem}
If a mixed super quasi-Einstein manifold is generalized Ricci recurrent, then
\begin{itemize}
	\item Either $\nabla_{Z}\xi_{1} \perp \xi_{2}$ or $\xi_{1}$ is parallel vector field if and only if  $Z(\Psi_{1}+\Psi_{2})-\gamma(Z)(\Psi_{1}+\Psi_{2})-\delta(Z)=0$.
	\item Either $\nabla_{Z}\xi_{2} \perp \xi_{1}$ or $\xi_{2}$ is parallel vector field if and only if $Z\big(\Psi_{1}+\Psi_{3}+\Psi_{5}\mathcal{D}(\xi_{2},\xi_{2})\big)-\gamma(Z)\big(\Psi_{1}+\Psi_{3}+\Psi_{5}\mathcal{D}(\xi_{2},\xi_{2})\big)-\delta(Z)-2\Psi_{5}\mathcal{D}(\nabla_{Z}\xi_{2},\xi_{2})=0$.
\end{itemize}
\end{theorem}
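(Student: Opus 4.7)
The plan is to plug the two generator vector fields into both the Ricci formula (\ref{1.3}) and the generalized Ricci recurrence condition (\ref{2.6}), and then extract information about $\nabla_Z \xi_i$ by exploiting the hypothesis $\Psi_4 \neq 0$.

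First I record the one-argument contractions of the Ricci tensor. Setting $X=Y=\xi_1$ in (\ref{1.3}), using $g(\xi_1,\xi_1)=\mathcal{A}(\xi_1)=1$, $\mathcal{B}(\xi_1)=0$, and $\mathcal{D}(X,\xi_1)=0$, yields ${\rm Ric}(\xi_1,\xi_1)=\Psi_1+\Psi_2$; an analogous substitution gives ${\rm Ric}(\xi_2,\xi_2)=\Psi_1+\Psi_3+\Psi_5\mathcal{D}(\xi_2,\xi_2)$. The corresponding one-slot expressions are
\[
{\rm Ric}(X,\xi_1)=(\Psi_1+\Psi_2)\mathcal{A}(X)+\Psi_4 \mathcal{B}(X), \quad {\rm Ric}(X,\xi_2)=(\Psi_1+\Psi_3)\mathcal{B}(X)+\Psi_4 \mathcal{A}(X)+\Psi_5 \mathcal{D}(X,\xi_2).
\]

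For the first bullet, I apply (\ref{2.6}) at $X=Y=\xi_1$ and expand the left-hand side by the Leibniz rule as $(\nabla_Z{\rm Ric})(\xi_1,\xi_1)=Z({\rm Ric}(\xi_1,\xi_1))-2{\rm Ric}(\nabla_Z\xi_1,\xi_1)$. The normalization $g(\xi_1,\xi_1)=1$ forces $\mathcal{A}(\nabla_Z\xi_1)=g(\nabla_Z\xi_1,\xi_1)=0$, so the one-slot formula collapses to ${\rm Ric}(\nabla_Z\xi_1,\xi_1)=\Psi_4\, g(\nabla_Z\xi_1,\xi_2)$. Substituting into the recurrence relation and rearranging produces
\[
2\Psi_4\, g(\nabla_Z\xi_1,\xi_2) = Z(\Psi_1+\Psi_2)-\gamma(Z)(\Psi_1+\Psi_2)-\delta(Z).
\]
Since $\Psi_4\neq 0$ by hypothesis, the right-hand side vanishes iff $g(\nabla_Z\xi_1,\xi_2)=0$, i.e.\ $\nabla_Z\xi_1\perp \xi_2$ (the case $\nabla\xi_1=0$ being a trivial subcase that automatically satisfies the orthogonality).

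The second bullet proceeds in exactly the same way with $X=Y=\xi_2$, the only new feature being that ${\rm Ric}(X,\xi_2)$ retains the term $\Psi_5 \mathcal{D}(X,\xi_2)$. Using $g(\nabla_Z\xi_2,\xi_2)=0$ eliminates the $(\Psi_1+\Psi_3)$ piece from ${\rm Ric}(\nabla_Z\xi_2,\xi_2)$ but leaves an extra $\Psi_5 \mathcal{D}(\nabla_Z\xi_2,\xi_2)$; transposing that term to the right side reproduces exactly the displayed condition, and then dividing by $2\Psi_4\neq 0$ delivers the stated equivalence with $\nabla_Z\xi_2\perp\xi_1$. No step is conceptually hard; the only bookkeeping to watch is to treat $\Psi_5\mathcal{D}(\xi_2,\xi_2)$ as a single scalar function when differentiating along $Z$, so that its full $Z$-derivative is packaged inside $Z(\Psi_1+\Psi_3+\Psi_5\mathcal{D}(\xi_2,\xi_2))$ and is not expanded further.
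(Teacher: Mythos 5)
Your proposal is correct and follows essentially the same route as the paper: evaluate the generalized Ricci recurrence condition at $X=Y=\xi_{1}$ (resp.\ $X=Y=\xi_{2}$) using the Leibniz expansion of $(\nabla_{Z}{\rm Ric})$, use the normalizations $g(\xi_{i},\xi_{i})=\pm1$ and $\mathcal{D}(X,\xi_{1})=0$ to reduce ${\rm Ric}(\nabla_{Z}\xi_{i},\xi_{i})$ to the cross term, and divide by $2\Psi_{4}\neq0$. The only difference is cosmetic — you contract the Ricci formula into one-slot expressions before specializing, whereas the paper substitutes the full formula (\ref{1.3}) into the recurrence identity and then sets $X=Y=\xi_{i}$.
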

\begin{proof}
Since
\begin{equation}\label{2.7}
\begin{split}
(\nabla_{Z}{\rm Ric})(X,Y)=Z{\rm Ric}(X,Y)-{\rm Ric}(\nabla_{Z}X,Y)-{\rm Ric}(X,\nabla_{Z}Y).
\end{split}
\end{equation}
Thus, in view of (\ref{2.6}) and  (\ref{2.7}), we have
\begin{equation}\label{2.8}
\begin{split}
\gamma(Z){\rm Ric}(X,Y)+\delta(Z)g(X,Y)=&Z{\rm Ric}(X,Y)-{\rm Ric}(\nabla_{Z}X,Y)\\
&-{\rm Ric}(X,\nabla_{Z}Y).
\end{split}
\end{equation}
Using (\ref{1.3}) in (\ref{2.8}), we obtain
\begin{equation}\label{2.9}
\begin{split}
\gamma(Z)&\big[\Psi_{1}g(X,Y)+\Psi_{2}\mathcal{A}(X)\mathcal{A}(Y)+\Psi_{3}\mathcal{B}(X)\mathcal{B}(Y)+\Psi_{4}\big(\mathcal{A}(X)\mathcal{B}(Y)\\
&+\mathcal{B}(X)\mathcal{A}(Y)\big)+\Psi_{5}\mathcal{D}(X,Y)\big]+\delta(Z)g(X,Y)\\
=&Z\big[\Psi_{1}g(X,Y)+\Psi_{2}\mathcal{A}(X)\mathcal{A}(Y)+\Psi_{3}\mathcal{B}(X)\mathcal{B}(Y)\\
&+\Psi_{4}\big(\mathcal{A}(X)\mathcal{B}(Y)+\mathcal{B}(X)\mathcal{A}(Y)\big)+\Psi_{5}\mathcal{D}(X,Y)\big]\\
&-\big[\Psi_{1}g(\nabla_{Z}X,Y)+\Psi_{2}\mathcal{A}(\nabla_{Z}X)\mathcal{A}(Y)+\Psi_{3}\mathcal{B}(\nabla_{Z}X)\mathcal{B}(Y)\\
&+\Psi_{4}\big(\mathcal{A}(\nabla_{Z}X)\mathcal{B}(Y)+\mathcal{B}(\nabla_{Z}X)\mathcal{A}(Y)\big)+\Psi_{5}\mathcal{D}(\nabla_{Z}X,Y)\big]\\
&-\big[\Psi_{1}g(X,\nabla_{Z}Y)+\Psi_{2}\mathcal{A}(X)\mathcal{A}(\nabla_{Z}Y)+\Psi_{3}\mathcal{B}(X)\mathcal{B}(\nabla_{Z}Y)\\
&+\Psi_{4}\big(\mathcal{A}(X)\mathcal{B}(\nabla_{Z}Y)+\mathcal{B}(X)\mathcal{A}(\nabla_{Z}Y)\big)+\Psi_{5}\mathcal{D}(X,\nabla_{Z}Y)\big].
\end{split}
\end{equation}
Taking $X=Y=\xi_{1}$ in (\ref{2.9}) gives
\begin{equation*}
\begin{split}
Z(\Psi_{1}+\Psi_{2})-\gamma(Z)(\Psi_{1}+\Psi_{2})-\delta(Z)=2\Psi_{4}\mathcal{B}(\nabla_{Z}\xi_{1}).
\end{split}
\end{equation*}
Thus, $\mathcal{B}(\nabla_{Z}\xi_{1})=0 \Leftrightarrow Z(\Psi_{1}+\Psi_{2})-\gamma(Z)(\Psi_{1}+\Psi_{2})-\delta(Z)=0$. This implies that either $\nabla_{Z}\xi_{1} \perp \xi_{2}$ or $\xi_{1}$ is parallel vector field.

Again, taking $X=Y=\xi_{2}$ in (\ref{2.9}) gives
\begin{equation*}
\begin{split}
Z\big(\Psi_{1}&+\Psi_{3}+\Psi_{5}\mathcal{D}(\xi_{2},\xi_{2})\big)-\gamma(Z)\big(\Psi_{1}+\Psi_{3}+\Psi_{5}\mathcal{D}(\xi_{2},\xi_{2})\big)-\delta(Z)\\
&-2\Psi_{5}\mathcal{D}(\nabla_{Z}\xi_{2},\xi_{2})=2\Psi_{4}\mathcal{A}(\nabla_{Z}\xi_{2}).
\end{split}
\end{equation*}
Thus, $\mathcal{A}(\nabla_{Z}\xi_{2})=0 \Leftrightarrow Z\big(\Psi_{1}+\Psi_{3}+\Psi_{5}\mathcal{D}(\xi_{2},\xi_{2})\big)-\gamma(Z)\big(\Psi_{1}+\Psi_{3}+\Psi_{5}\mathcal{D}(\xi_{2},\xi_{2})\big)-\delta(Z)-2\Psi_{5}\mathcal{D}(\nabla_{Z}\xi_{2},\xi_{2})=0$. This implies that either $\nabla_{Z}\xi_{2} \perp \xi_{1}$ or $\xi_{2}$ is parallel vector field.
\end{proof}

\subsection{Conformal Ricci pseudosymmetry}\

We begin by recalling the notion of conformal curvature tensor ${\rm C}$.
(see \cite{ref17})
\begin{definition}
The conformal curvature tensor  ${\rm C}$ of type $(1,3)$ of an $n$-dimensional Riemannian manifold $(\mathcal{M}^{n},g)$ is defined by
\begin{equation}\label{2.10}
\begin{split}
{\rm C}(X,Y)Z=&{\rm R}(X,Y)Z-\displaystyle\frac{1}{n-2}\big({\rm Ric}(Y,Z)X-{\rm Ric}(X,Z)Y+g(Y,Z)QX\\
&-g(X,Z)QY\big)+\displaystyle\frac{r}{(n-1)(n-2)}\big(g(Y,Z)X-g(X,Z)Y\big),
\end{split}
\end{equation}
where $Q$ is the Ricci operator defined by ${\rm Ric}(X,Y)= g(QX,Y)$.
\end{definition}
The conformal curvature tensor ${\rm C}$ satisfies the following symmetry property:
\begin{equation}\label{2.11}
\begin{split}
{\rm C}(X,Y,Z,W)=-{\rm C}(X,Y,W,Z),
\end{split}
\end{equation}
where  ${\rm C}(X,Y,Z,W)=g({\rm C}(X,Y)Z,W)$.

In \cite{ref28}, the notion of ${\rm J}$-Ricci pseudosymmetric manifold was introduced by Shaikh and Kundu. If ${\rm J}$=${\rm C}$, then we have the following definition .
\begin{definition}
A Riemannian manifold $(\mathcal{M}^{n},g)$  is called  conformal Ricci pseudosymmetric manifold if its conformal curvature tensor  C satisfies
\begin{equation}\label{2.12}
\begin{split}
({\rm C}(X,Y)\cdot{\rm Ric})(Z,W) = F_{{\rm Ric}} Q(g,{\rm Ric})(Z,W;X,Y)
\end{split}
\end{equation}
holds on $U_{{\rm Ric}} = \{x\in\mathcal{M} :{\rm Ric}\neq\frac{r}{n}g$ at $x\}$, where $F_{{\rm Ric}}$ is a certain function on  $U_{{\rm Ric}}$.
\end{definition}

Next, we give the  definition of the eigenvector of the tensor $\mathcal{D}$ corresponding to the eigenvalue.
\begin{definition}
A vector field  $U$ is said to be an eigenvector of the tensor $\mathcal{D}$ corresponding to the eigenvalue $b$ if and only if
\begin{equation*}
\begin{split}
\mathcal{D}(X,U)=bg(X,U).	
\end{split}
\end{equation*}
\end{definition}

When we consider that a  mixed super quasi-Einstein manifold is conformal Ricci pseudosymmetric, we can obtain the following theorem.
\begin{theorem}
Let a  mixed super quasi-Einstein manifold be conformal Ricci pseudosymmetric.
\begin{itemize}
	\item If $m$ is not an eigenvalue of the tensor $\mathcal{D}$ corresponding to the eigenvector $\xi_{2}$, then
\begin{equation*}
\begin{split}
{\rm R}(X,Y,\xi_{1},\xi_{2})=\mathcal{E}(X)\mathcal{A}(Y)-\mathcal{A}(X)\mathcal{E}(Y).
\end{split}
\end{equation*}
   \item  If $m$ is an eigenvalue of the tensor $\mathcal{D}$ corresponding to the eigenvector $\xi_{2}$, then
\begin{equation*}
\begin{split}
{\rm R}(X,Y,\xi_{1},\xi_{2})=0.
\end{split}
\end{equation*}
Where $m=-\displaystyle\frac{{\rm R}(\xi_{2},\xi_{1},\xi_{1},\xi_{2})(n-2)}{\Psi_{5}}+\mathcal{D}(\xi_{2},\xi_{2}).$
\end{itemize}
\end{theorem}
\begin{proof}
Equation \ref{2.12} can be expressed as
\begin{equation}\label{2.13}
\begin{split}
{\rm Ric}(&{\rm C}(X,Y)Z,W)+{\rm Ric}(Z,{\rm C}(X,Y)W)\\
=&F_{{\rm Ric}}\big(g(Y,Z){\rm Ric}(X,W)-g(X,Z){\rm Ric}(Y,W)\\
&+g(Y,W){\rm Ric}(X,Z)-g(X,W){\rm Ric}(Y,Z)\big).
\end{split}
\end{equation}	
Using (\ref{1.3}) and (\ref{2.11}) in (\ref{2.13}), we have
\begin{equation}\label{2.14}
\begin{split}
\Psi_{2}&\big(\mathcal{A}({\rm C}(X,Y)Z)\mathcal{A}(W)+\mathcal{A}(Z)\mathcal{A}({\rm C}(X,Y)W)\big)\\
&+\Psi_{3}\big(\mathcal{B}({\rm C}(X,Y)Z)\mathcal{B}(W)+\mathcal{B}(Z)\mathcal{B}({\rm C}(X,Y)W)\big)\\
&+\Psi_{4}\big(\mathcal{A}({\rm C}(X,Y)Z)\mathcal{B}(W)+\mathcal{B}({\rm C}(X,Y)Z)\mathcal{A}(W)\\
&+\mathcal{A}(Z)\mathcal{B}({\rm C}(X,Y)W)+\mathcal{B}(Z)\mathcal{A}({\rm C}(X,Y)W)\big)\\
&+\Psi_{5}\big(\mathcal{D}({\rm C}(X,Y)Z,W)+\mathcal{D}(Z,{\rm C}(X,Y)W)\\
=&F_{{\rm Ric}}\big(g(Y,Z){\rm Ric}(X,W)-g(X,Z){\rm Ric}(Y,W)\\
&+g(Y,W){\rm Ric}(X,Z)-g(X,W){\rm Ric}(Y,Z)\big).
\end{split}
\end{equation}
We substitute $Z=W=\xi_{1}$ into (\ref{2.14})  to acquire
\begin{equation}\label{2.15}
\begin{split}
\Psi_{4}{\rm C}(X,Y,\xi_{1},\xi_{2})=\Psi_{4}F_{{\rm Ric}}\big(\mathcal{B}(X)\mathcal{A}(Y)-\mathcal{A}(X)\mathcal{B}(Y)\big).
\end{split}
\end{equation}
Since $\Psi_{4} \neq 0$, thus (\ref{2.15}) turns into
\begin{equation}\label{2.16}
\begin{split}
{\rm R}(X,Y,\xi_{1},\xi_{2})=&\big(F_{{\rm Ric}}+\displaystyle\frac{2\Psi_{1}+\Psi_{2}+\Psi_{3}}{n-2}-\displaystyle\frac{r}{(n-1)(n-2)}\big)\big(\mathcal{B}(X)\mathcal{A}(Y)\\
&-\mathcal{A}(X)\mathcal{B}(Y)\big)+\displaystyle\frac{\Psi_{5}}{n-2}\big(\mathcal{D}(X,\xi_{2})\mathcal{A}(Y)\\
&-\mathcal{A}(X)\mathcal{D}(Y,\xi_{2})\big).
\end{split}
\end{equation}
Putting $X=\xi_{2}$ and $Y=\xi_{1}$ into (\ref{2.16}), then
\begin{equation}\label{2.17}
\begin{split}
F_{{\rm Ric}}=&{\rm R}(\xi_{2},\xi_{1},\xi_{1},\xi_{2})+\displaystyle\frac{r}{(n-1)(n-2)}-\displaystyle\frac{2\Psi_{1}+\Psi_{2}+\Psi_{3}}{n-2}\\
&-\displaystyle\frac{\Psi_{5}}{n-2}\mathcal{D}(\xi_{2},\xi_{2}).
\end{split}
\end{equation}
Combining (\ref{2.16}) and (\ref{2.17}), we can acquire
\begin{equation*}
\begin{split}
{\rm R}(X,Y,\xi_{1},\xi_{2})=&\big(({\rm R}(\xi_{2},\xi_{1},\xi_{1},\xi_{2})-\displaystyle\frac{\Psi_{5}}{n-2}\mathcal{D}(\xi_{2},\xi_{2}))\mathcal{B}(X)+\displaystyle\frac{\Psi_{5}}{n-2}\mathcal{D}(X,\xi_{2})\big)\mathcal{A}(Y)\\
&-\big(({\rm R}(\xi_{2},\xi_{1},\xi_{1},\xi_{2})-\displaystyle\frac{\Psi_{5}}{n-2}\mathcal{D}(\xi_{2},\xi_{2}))\mathcal{B}(Y)\\
&+\displaystyle\frac{\Psi_{5}}{n-2}\mathcal{D}(Y,\xi_{2})\big)\mathcal{A}(X),
\end{split}
\end{equation*}
i.e.
\begin{equation}\label{2.18}
\begin{split}
{\rm R}(X,Y,\xi_{1},\xi_{2})=\mathcal{E}(X)\mathcal{A}(Y)-\mathcal{E}(Y)\mathcal{A}(X),
\end{split}
\end{equation}
where $\mathcal{E}(X)=g(X,\vartheta)=({\rm R}(\xi_{2},\xi_{1},\xi_{1},\xi_{2})-\displaystyle\frac{\Psi_{5}}{n-2}\mathcal{D}(\xi_{2},\xi_{2}))\mathcal{B}(X)+\displaystyle\frac{\Psi_{5}}{n-2}\mathcal{D}(X,\xi_{2})$ for all $X$ $\in {{C}^{\infty }}(TM)$, provide $\mathcal{E}\neq 0$.

If $\mathcal{E}(X)=0$, we can obtain
\begin{equation*}
\begin{split}
\mathcal{D}(X,\xi_{2})=\big(-\displaystyle\frac{{\rm R}(\xi_{2},\xi_{1},\xi_{1},\xi_{2})(n-2)}{\Psi_{5}}+\mathcal{D}(\xi_{2},\xi_{2})\big)g(X,\xi_{2}),
\end{split}
\end{equation*}
which implies that $\big(-\frac{{\rm R}(\xi_{2},\xi_{1},\xi_{1},\xi_{2})(n-2)}{\Psi_{5}}+\mathcal{D}(\xi_{2},\xi_{2})\big)$ is an eigenvalue of the tensor $\mathcal{D}$ corresponding to the eigenvector $\xi_{2}$.

From (\ref{2.18}), it follows that
\begin{equation*}
\begin{split}
{\rm R}(X,Y,\xi_{1},\xi_{2})=0.
\end{split}
\end{equation*}
\end{proof}

\section{Some Physical Properties}

The Einstein's field equation  without a cosmological constant is denoted by\cite{ref19}
\begin{equation}\label{3.1}
\begin{split}
{\rm Ric}(X,Y)-\displaystyle\frac{r}{2}g(X,Y)=\kappa{\rm T}(X,Y),
\end{split}
\end{equation}
where $\kappa$ is a gravitational constant and ${\rm T}$ is the energy-momentum tensor.

Petrov\cite{ref23} introduced the space-matter tensor ${\rm P}$ of type $(0,4)$  which can be expressed as
\begin{equation}\label{3.2}
\begin{split}
{\rm P}={\rm R}+\frac{\kappa}{2}g\wedge {\rm T}-\sigma{\rm G},
\end{split}
\end{equation}
where ${\rm R}$ is the curvature tensor of type (0,4), $\sigma$ is the energy density as well as  $g\wedge {\rm T}$  is given by
\begin{equation}\label{3.3}
\begin{split}
g\wedge {\rm T}(X,Y,Z,W)=&g(X,W){\rm T}(Y,Z)+g(Y,Z){\rm T}(X,W)\\
&-g(X,Z){\rm T}(Y,W)-g(Y,W){\rm T}(X,Z),
\end{split}
\end{equation}
and ${\rm G}$ is defined by
\begin{equation}\label{3.4}
\begin{split}
{\rm G}(X,Y,Z,W)= g(X,W)g(Y,Z)-g(X,Z)g(Y,W),
\end{split}
\end{equation}
for all  $X,Y,Z,W$ $\in {{C}^{\infty }}(TM)$.

In this section, we will give some physical properties of the mixed super quasi-Einstein spacetime.

\subsection{Einstein's field equation in the $M^{4}_{SQE}$ spacetime }\

Let us consider that the associated 1-forms $\mathcal{A}$ and $\mathcal{B}$, the symmetric tensor $\mathcal{D}$ and the energy-momentum tensor ${\rm T}$ are covariant constant. Then, we have
\begin{align}
(\nabla_{Z}\mathcal{A})(X)=0,\quad &(\nabla_{Z}\mathcal{B})(X)=0,\quad (\nabla_{Z}\mathcal{D})(X,Y)=0,\label{3.5}\\
&(\nabla_{Z}{\rm T})(X,Y)=0.\label{3.6}
\end{align}

In the subsection, we study the  mixed super quasi-Einstein spacetime that admits Einstein's field equation without a cosmological constant, and subsequently we obtain the following theorem.
\begin{theorem}
Let a connected pseudo-Riemannian manifold $(\mathcal{M}^{4},g)$ be the mixed super quasi-Einstein spacetime satisfying Einstein's field equation without a cosmological constant. If the 1-forms $\mathcal{A}, \mathcal{B}$ and the tensor $\mathcal{D}$ are covariant constant. Suppose that the associated scalars are constants. Then the energy-momentum tensor $T$ is also covariant constant.
\end{theorem}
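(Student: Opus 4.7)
The plan is to differentiate Einstein's field equation (\ref{3.1}) covariantly and show that, under the given hypotheses, every term on the left-hand side has vanishing covariant derivative. Since $\kappa$ is a nonzero constant, this forces $\nabla_Z {\rm T} = 0$.

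First I would rewrite Einstein's equation (\ref{3.1}) as
\begin{equation*}
\kappa {\rm T}(X,Y) = {\rm Ric}(X,Y) - \frac{r}{2}g(X,Y),
\end{equation*}
and substitute the defining expression (\ref{1.3}) for the Ricci tensor of an ${\rm M^{n}_{SQE}}$ manifold. This yields $\kappa {\rm T}$ as an explicit combination of the associated scalars, the 1-forms $\mathcal{A},\mathcal{B}$, the symmetric tensor $\mathcal{D}$, and the metric $g$.

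Next I would apply $\nabla_Z$ to this expression. Using the hypothesis that $\Psi_1,\ldots,\Psi_5$ are constants, formula (\ref{2.1}) gives $r = n\Psi_1 + \Psi_2 + \Psi_3 = \text{const}$, so $\nabla_Z r = 0$. The Leibniz rule reduces $\nabla_Z {\rm Ric}$ to a sum of terms each containing either $(\nabla_Z \mathcal{A})(X)$, $(\nabla_Z \mathcal{B})(X)$, $(\nabla_Z \mathcal{D})(X,Y)$, or $\nabla_Z g$, all of which vanish by (\ref{3.5}) and metric compatibility. Likewise $\nabla_Z\bigl(\tfrac{r}{2}g\bigr) = 0$. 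Hence the left-hand side of the differentiated Einstein equation is zero, giving
\begin{equation*}
\kappa (\nabla_Z {\rm T})(X,Y) = 0.
\end{equation*}
Since $\kappa \neq 0$, we conclude $\nabla_Z {\rm T} = 0$, i.e., ${\rm T}$ is covariant constant.

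There is no serious obstacle; the argument is essentially bookkeeping on the Leibniz rule. The only subtle point is noticing that constancy of the scalars, together with (\ref{2.1}), immediately kills the $\nabla_Z r$ term, so no auxiliary hypothesis on $r$ is needed beyond what is already assumed.
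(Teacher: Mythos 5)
Your proposal is correct and follows essentially the same route as the paper: substitute the ${\rm M^{n}_{SQE}}$ expression (\ref{1.3}) for ${\rm Ric}$ into Einstein's equation (\ref{3.1}), covariantly differentiate, kill the scalar-derivative terms (including $\nabla_Z r$, since $r$ is an algebraic combination of the constant associated scalars) by the constancy hypothesis, and kill the remaining terms by (\ref{3.5}) and metric compatibility, leaving $\kappa(\nabla_Z {\rm T})=0$. The only cosmetic difference is that you make the vanishing of $\nabla_Z r$ explicit via the trace formula, whereas the paper absorbs it silently into the term $Z(\Psi_1 - \tfrac{r}{2})$.
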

\begin{proof}
By virtue of (\ref{1.3}) and (\ref{3.1}), we can find
\begin{equation}\label{3.7}
\begin{split}
(\Psi_{1}-\displaystyle\frac{ r}{2})g(X,Y)&+\Psi_{2}\mathcal{A}(X)\mathcal{A}(Y)+\Psi_{3}\mathcal{B}(X)\mathcal{B}(Y)+\Psi_{4}\big(\mathcal{A}(X)\mathcal{B}(Y)\\
&+\mathcal{B}(X)\mathcal{A}(Y)\big)+\Psi_{5}\mathcal{D}(X,Y)=\kappa{\rm T}(X,Y).
\end{split}
\end{equation}
Now, taking the covariant derivative of (\ref{3.7}) with respect to $Z$, we arrive at
\begin{equation}\label{3.8}
\begin{split}
Z(&\Psi_{1}-\displaystyle\frac{ r}{2})g(X,Y)+Z(\Psi_{2})\mathcal{A}(X)\mathcal{A}(Y)+\Psi_{2}\big((\nabla_{Z}\mathcal{A})(X)\mathcal{A}(Y)\\
&+(\nabla_{Z}\mathcal{A})(Y)\mathcal{A}(X)\big)+Z(\Psi_{3})\mathcal{B}(X)\mathcal{B}(Y)+\Psi_{3}\big((\nabla_{Z}\mathcal{B})(X)\mathcal{B}(Y)\\
&+(\nabla_{Z}\mathcal{B})(Y)\mathcal{B}(X)\big)+Z(\Psi_{4})\big(\mathcal{A}(X)\mathcal{B}(Y)+\mathcal{B}(X)\mathcal{A}(Y)\big)\\
&+\Psi_{4}\big((\nabla_{Z}\mathcal{A})(X)\mathcal{B}(Y)+(\nabla_{Z}\mathcal{B})(Y)\mathcal{A}(X)+(\nabla_{Z}\mathcal{A})(Y)\mathcal{B}(X)\\
&+(\nabla_{Z}\mathcal{B})(X)\mathcal{A}(Y)\big)+Z(\Psi_{5})\mathcal{D}(X,Y)+\Psi_{5}(\nabla_{Z}\mathcal{D})(X,Y)\\
=&\kappa(\nabla_{Z}{\rm T})(X,Y).
\end{split}
\end{equation}
Since the associated scalars are constants, (\ref{3.8}) turns into
\begin{equation}\label{3.9}
\begin{split}
\Psi_{2}&\big((\nabla_{Z}\mathcal{A})(X)\mathcal{A}(Y)+(\nabla_{Z}\mathcal{A})(Y)\mathcal{A}(X)\big)+\Psi_{3}\big((\nabla_{Z}\mathcal{B})(X)\mathcal{B}(Y)\\
&+(\nabla_{Z}\mathcal{B})(Y)\mathcal{B}(X)\big)+\Psi_{4}\big((\nabla_{Z}\mathcal{A})(X)\mathcal{B}(Y)+(\nabla_{Z}\mathcal{B})(Y)\mathcal{A}(X)\\
&+(\nabla_{Z}\mathcal{A})(Y)\mathcal{B}(X)+(\nabla_{Z}\mathcal{B})(X)\mathcal{A}(Y)\big)+\Psi_{5}(\nabla_{Z}\mathcal{D})(X,Y)\\
=&\kappa(\nabla_{Z}{\rm T})(X,Y).
\end{split}
\end{equation}
Using (\ref{3.5}) into (\ref{3.9}), we can get (\ref{3.6}). Our theorem is thus proved.
\end{proof}

\subsection{The vanishing space-matter tensor}\

In the subsection, we consider the mixed super quasi-Einstein spacetime with a vanishing space-matter tensor. In \cite{ref29},  Vasiulla, Ali and \"{U}nal gave the following theorem.
\begin{theorem}\cite[Theorem 10]{ref29}.
The mixed super quasi-Einstein spacetime with a vanishing space-matter tensor  satisfying Einstein's field equation without a cosmological constant is a spacetime of mixed super quasi-constant curvature.
\end{theorem}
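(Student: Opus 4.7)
The plan is to express the Riemannian curvature tensor ${\rm R}$ explicitly in terms of $g$, the associated $1$-forms $\mathcal{A},\mathcal{B}$, and the tensor $\mathcal{D}$, by chaining the three available hypotheses in turn: the vanishing of the space-matter tensor (\ref{3.2}), Einstein's field equation (\ref{3.1}), and the mixed super quasi-Einstein decomposition (\ref{1.3}). Once ${\rm R}$ has been rewritten in this way, I expect it to match, term by term, the defining identity of a spacetime of mixed super quasi-constant curvature, with each scalar $\Phi_i$ determined explicitly by the $\Psi_i$, $r$, $\sigma$, and $\kappa$.

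First, setting ${\rm P}=0$ in (\ref{3.2}) immediately gives ${\rm R} = -\tfrac{\kappa}{2}(g\wedge {\rm T}) + \sigma\,{\rm G}$. Next, I would substitute Einstein's field equation (\ref{3.1}), rewritten as $\kappa {\rm T} = {\rm Ric} - \tfrac{r}{2}g$. Since $g\wedge(\,\cdot\,)$ as defined by (\ref{3.3}) is linear in its second slot, and $g\wedge g = 2{\rm G}$ by (\ref{3.4}), this yields $\kappa(g\wedge {\rm T}) = g\wedge {\rm Ric} - r\,{\rm G}$, and therefore
\[
{\rm R} = -\tfrac{1}{2}\,(g\wedge {\rm Ric}) + \bigl(\tfrac{r}{2} + \sigma\bigr){\rm G}.
\]

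The third step is to plug in the mixed super quasi-Einstein form (\ref{1.3}) of ${\rm Ric}$. Linearity of $g\wedge(\,\cdot\,)$ distributes over the five summands $\Psi_1 g$, $\Psi_2\mathcal{A}\otimes\mathcal{A}$, $\Psi_3\mathcal{B}\otimes\mathcal{B}$, $\Psi_4(\mathcal{A}\otimes\mathcal{B}+\mathcal{B}\otimes\mathcal{A})$, and $\Psi_5\mathcal{D}$, with the first producing an additional $2\Psi_1{\rm G}$ contribution that merges with the $(\tfrac{r}{2}+\sigma){\rm G}$ above. Reading off coefficients, I expect an identity of the shape
\[
{\rm R} = \Phi_1 {\rm G} + \Phi_2\bigl(g\wedge (\mathcal{A}\otimes \mathcal{A})\bigr) + \Phi_3\bigl(g\wedge (\mathcal{B}\otimes \mathcal{B})\bigr) + \Phi_4\,g\wedge(\mathcal{A}\otimes\mathcal{B}+\mathcal{B}\otimes\mathcal{A}) + \Phi_5\bigl(g\wedge \mathcal{D}\bigr),
\]
with $\Phi_1 = \tfrac{r}{2} + \sigma - \Psi_1$ and $\Phi_i = -\Psi_i/2$ for $i=2,3,4,5$. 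Expanding each $g\wedge(\,\cdot\,)$ via (\ref{3.3}) should reproduce, summand by summand, the four-term pattern in the definition of ${\rm M}^{n}_{\rm SQC}$.

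The main obstacle I anticipate is bookkeeping rather than anything conceptual: the defining identity of ${\rm M}^{n}_{\rm SQC}$ writes each Kulkarni--Nomizu product as four explicit signed terms keyed to the permutation of $(X,Y,Z,W)$, so I would need to check carefully that my expansions of $g\wedge(\mathcal{A}\otimes\mathcal{A})$, $g\wedge(\mathcal{B}\otimes\mathcal{B})$, and the mixed $\mathcal{A},\mathcal{B}$ summand match the stated pattern in every term and sign, and that the symmetry of $\mathcal{D}$ makes $g\wedge \mathcal{D}$ reproduce the $\Phi_5$ block without any further symmetrization. Once those matchings are verified, the theorem will follow purely algebraically from (\ref{1.3}), (\ref{3.1}), and ${\rm P}=0$, with no additional geometric input.
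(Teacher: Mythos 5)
Your proposal is correct and follows essentially the same route as the paper: set ${\rm P}=0$, eliminate ${\rm T}$ via Einstein's equation, substitute the mixed super quasi-Einstein form of ${\rm Ric}$, and read off the coefficients, arriving at the same values $f_1=\tfrac{r}{2}-\Psi_1+\sigma$ and $f_i=-\Psi_i/2$ for $i=2,\dots,5$. The only cosmetic difference is that the paper first solves (\ref{3.7}) for ${\rm T}$ in terms of the $\Psi_i$ and then forms $g\wedge{\rm T}$, whereas you pass through $g\wedge{\rm Ric}$; the bookkeeping is identical.
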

\begin{proof}
With the help of (\ref{3.2})-(\ref{3.4}), we can obtain
\begin{equation}\label{3.10}
\begin{split}
{\rm P}(X,Y,Z,W)=&{\rm R}(X,Y,Z,W)+\displaystyle\frac{\kappa}{2}\big(g(X,W){\rm T}(Y,Z)\\
&+g(Y,Z){\rm T}(X,W)\big)-g(X,Z){\rm T}(Y,W)\\
&-g(Y,W){\rm T}(X,Z)-\sigma\big(g(X,W)g(Y,Z)\\
&-g(X,Z)g(Y,W)\big).
\end{split}
\end{equation}
Making use of  ${\rm P}=0$, then (\ref{3.10}) takes the form
\begin{equation}\label{3.11}
\begin{split}
{\rm R}(X,Y,Z,W)=&-\displaystyle\frac{\kappa}{2}\big(g(X,W){\rm T}(Y,Z)+g(Y,Z){\rm T}(X,W)\big)\\
&-g(X,Z){\rm T}(Y,W)-g(Y,W){\rm T}(X,Z)\\
&+\sigma\big(g(X,W)g(Y,Z)-g(X,Z)g(Y,W)\big).
\end{split}
\end{equation}
From (\ref{3.7}), it follows that
\begin{equation}\label{3.12}
\begin{split}
{\rm T}(X,Y)=&\big(\displaystyle\frac{\Psi_{1}-\frac{r}{2}}{\kappa}\big)g(X,Y)+
\displaystyle\frac{\Psi_{2}}{\kappa}\mathcal{A}(X)\mathcal{A}(X)+\displaystyle\frac{\Psi_{3}}{\kappa}\mathcal{B}(X)\mathcal{B}(Y)\\
&+\displaystyle\frac{\Psi_{4}}{\kappa}\big(\mathcal{A}(X)\mathcal{B}(Y)+\mathcal{B}(X)\mathcal{A}(Y)\big)+\displaystyle\frac{\Psi_{5}}{\kappa}\mathcal{D}(X,Y).
\end{split}
\end{equation}
Here, by combining (\ref{3.11}) and (\ref{3.12}) to obtain
\begin{equation}\label{3.13}
\begin{split}
{\rm R}(X,Y,Z,W)=&f_{1}\big(g(X,W)g(Y,Z)-g(X,Z)g(Y,W)\big)\\
&+f_{2}\big(g(X,W)\mathcal{A}(Y)\mathcal{A}(Z)-g(X,Z)\mathcal{A}(Y)\mathcal{A}(W)\\
&+g(Y,Z)\mathcal{A}(X)\mathcal{A}(W)-g(Y,W)\mathcal{A}(X)\mathcal{A}(Z)\big)\\
&+f_{3}\big(g(X,W)\mathcal{B}(Y)\mathcal{B}(Z)-g(X,Z)\mathcal{B}(Y)\mathcal{B}(W)\\
&+g(Y,Z)\mathcal{B}(X)\mathcal{B}(W)-g(Y,W)\mathcal{B}(X)\mathcal{B}(Z)\big)\\
&+f_{4}\big[g(X,W)\big(\mathcal{A}(Y)\mathcal{B}(Z)+\mathcal{A}(Z)\mathcal{B}(Y)\big)\\
&-g(X,Z)\big(\mathcal{A}(Y)\mathcal{B}(W)+\mathcal{A}(W)\mathcal{B}(Y)\big)\\
&+g(Y,Z)\big(\mathcal{A}(X)\mathcal{B}(W)+\mathcal{A}(W)\mathcal{B}(X)\big)\\
&-g(Y,W)\big(\mathcal{A}(X)\mathcal{B}(Z)+\mathcal{A}(Z)\mathcal{B}(X)\big)\big]\\
&+f_{5}\big(g(X,W)\mathcal{D}(Y,Z)-g(X,Z)\mathcal{D}(Y,W)\\
&+g(Y,Z)\mathcal{D}(X,W)-g(Y,W)\mathcal{D}(X,Z)\big),
\end{split}
\end{equation}
where
\begin{equation*}
\begin{split}
f_{1}&=\left(\displaystyle\frac{r}{2}-\Psi_{1}+\sigma\right), \quad f_{2}=-\displaystyle\frac{\Psi_{2}}{2},\quad f_{3}=-\displaystyle\frac{\Psi_{3}}{2},\\
f_{4}&=-\displaystyle\frac{\Psi_{4}}{2},\quad f_{5}=-\displaystyle\frac{\Psi_{5}}{2},
\end{split}
\end{equation*}
which means that the spacetime is a spacetime of mixed super quasi-constant curvature.
\end{proof}
Next, let $(\xi_{1}, \xi_{2})^\perp$ denote the $(n-2)$-dimensional distribution in the mixed super quasi-Einstein spacetime. Then $g(X,\xi_{1})=0, g(X,\xi_{2})=0$ if $X \in (\xi_{1}, \xi_{2})^\perp$. Hence, we can get following theorem.
\begin{theorem}
Let a connected pseudo-Riemannian manifold $(\mathcal{M}^{4},g)$ be the mixed super quasi-Einstein spacetime with a vanishing space-matter tensor  satisfying Einstein's field equation without a cosmological constant. If $\mathcal{M}^{4}$ is homogenous with respect to the symmetric tensor  $\mathcal{D}$ in the direction of $X$, then sectional curvature of the plane determined by two vectors $X \in (\xi_{1}, \xi_{2})^\perp$ and $\xi_{1}$ is
\begin{equation*}
\begin{split}
\Psi_{1}+\frac{\Psi_{3}}{2}+\sigma+\beta.
\end{split}
\end{equation*}
\end{theorem}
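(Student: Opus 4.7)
The statement reduces to evaluating a sectional curvature, so the plan is to exploit the explicit curvature formula (\ref{3.13}) from the previous theorem rather than re-derive anything from the space-matter tensor. The sectional curvature of the plane spanned by $X$ and $\xi_{1}$ is
\[
K(X,\xi_{1})=\frac{{\rm R}(X,\xi_{1},\xi_{1},X)}{g(X,X)g(\xi_{1},\xi_{1})-g(X,\xi_{1})^{2}},
\]
and since $X\in(\xi_{1},\xi_{2})^{\perp}$ we have $g(X,\xi_{1})=g(X,\xi_{2})=0$, so the denominator collapses to $g(X,X)$. Thus the whole problem is to compute the numerator in closed form.

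To do so, I would substitute $Y=Z=\xi_{1}$ and $W=X$ into (\ref{3.13}) and go through the five $f_{i}$-blocks one at a time. The orthogonality of $X$ to both generators gives $\mathcal{A}(X)=\mathcal{B}(X)=0$, together with $\mathcal{A}(\xi_{1})=1$ and $\mathcal{B}(\xi_{1})=0$; these identities kill the entire $f_{3}$ and $f_{4}$ contributions and reduce the $f_{2}$ block to $g(X,X)$. Combined with $f_{1}$, whose coefficient of $g(X,X)$ survives from the bare Kulkarni–Nomizu factor, this produces the terms $(f_{1}+f_{2})g(X,X)$. For the $f_{5}$ block I would use the defining property $\mathcal{D}(X,\xi_{1})=0$ (hence also $\mathcal{D}(\xi_{1},\xi_{1})=0$), which eliminates three of its four summands and leaves only $f_{5}\,\mathcal{D}(X,X)$.

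Putting these together gives
\[
{\rm R}(X,\xi_{1},\xi_{1},X)=(f_{1}+f_{2})\,g(X,X)+f_{5}\,\mathcal{D}(X,X),
\]
so $K(X,\xi_{1})=f_{1}+f_{2}+f_{5}\,\mathcal{D}(X,X)/g(X,X)$. Substituting the values $f_{1}=\tfrac{r}{2}-\Psi_{1}+\sigma$, $f_{2}=-\tfrac{\Psi_{2}}{2}$, $f_{5}=-\tfrac{\Psi_{5}}{2}$ together with the trace identity (\ref{2.1}) in dimension four, $r=4\Psi_{1}+\Psi_{2}+\Psi_{3}$, the $\Psi_{2}$-terms cancel and I obtain $\Psi_{1}+\tfrac{\Psi_{3}}{2}+\sigma-\tfrac{\Psi_{5}}{2}\mathcal{D}(X,X)/g(X,X)$. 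The hypothesis that $\mathcal{M}^{4}$ is homogeneous with respect to $\mathcal{D}$ in the direction of $X$ is exactly what promotes $\mathcal{D}(X,X)/g(X,X)$ to a well-defined scalar, which absorbed with $-\tfrac{\Psi_{5}}{2}$ is the quantity denoted $\beta$, giving the asserted value $\Psi_{1}+\tfrac{\Psi_{3}}{2}+\sigma+\beta$.

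I do not anticipate a conceptual obstacle here: the argument is a direct substitution once (\ref{3.13}) is available, and the only place care is needed is the bookkeeping in the $f_{4}$ block (which has eight mixed terms, each of which must be checked to vanish) and in correctly pairing the slots $(Y,Z,W)=(\xi_{1},\xi_{1},X)$ throughout so that the symmetric tensor $\mathcal{D}$ is evaluated on the right arguments. The only mildly subtle point is the interpretation of the homogeneity hypothesis, which I would state explicitly as the condition that the Rayleigh quotient $\mathcal{D}(X,X)/g(X,X)$ is constant along the chosen direction, so that $\beta:=-\tfrac{\Psi_{5}}{2}\mathcal{D}(X,X)/g(X,X)$ is indeed a scalar associated with the plane.
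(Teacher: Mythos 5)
Your proposal follows essentially the same route as the paper: substitute $(Y,Z,W)=(\xi_{1},\xi_{1},X)$ into the curvature expression (\ref{3.13}) obtained from the vanishing of the space-matter tensor, use $\mathcal{A}(X)=\mathcal{B}(X)=\mathcal{B}(\xi_{1})=\mathcal{D}(\cdot,\xi_{1})=0$ to kill the $f_{3}$ and $f_{4}$ blocks, and invoke the homogeneity hypothesis to turn $\mathcal{D}(X,X)/g(X,X)$ into a scalar. Two bookkeeping points are worth flagging. First, you work with the Riemannian normalization $g(\xi_{1},\xi_{1})=1$ and $r=4\Psi_{1}+\Psi_{2}+\Psi_{3}$, whereas in the spacetime setting the paper takes $\xi_{1}$ timelike, $g(\xi_{1},\xi_{1})=-1$, $r=4\Psi_{1}-\Psi_{2}+\Psi_{3}$, and $K(X,\xi_{1})=-{\rm R}(X,\xi_{1},\xi_{1},X)/g(X,X)$; the signs propagate consistently, so the $\Psi_{1}+\tfrac{\Psi_{3}}{2}+\sigma$ part comes out the same either way and this does not affect the conclusion. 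Second, and more substantively, you correctly retain the coefficient $f_{5}=-\tfrac{\Psi_{5}}{2}$ on the $\mathcal{D}(X,X)$ term and are then forced to define $\beta:=-\tfrac{\Psi_{5}}{2}\,\mathcal{D}(X,X)/g(X,X)$ to match the stated formula. The paper instead defines $\beta$ by $\mathcal{D}(X,X)=\beta\,g(X,X)$ in (\ref{3.15}) and its intermediate equation (\ref{3.14}) silently drops the factor $-\tfrac{\Psi_{5}}{2}$ from the last term; with the paper's own definition of $\beta$ the correct value of the sectional curvature is $\Psi_{1}+\tfrac{\Psi_{3}}{2}+\sigma-\tfrac{\Psi_{5}}{2}\beta$. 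So your computation is the more careful one, and your closing remark about what $\beta$ must mean is exactly the point at which the paper's stated result needs to be read with a corrected normalization.
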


\begin{proof}
Since $X \in (\xi_{1}, \xi_{2})^\perp$. By using (\ref{3.13}), we can write
\begin{equation}\label{3.14}
\begin{split}
{\rm R}(X,\xi_{1},\xi_{1},X)=&\left(\displaystyle\frac{r}{2}-\Psi_{1}+\sigma\right)\big(g(X,X)g(\xi_{1},\xi_{1})\big)-\displaystyle\frac{\Psi_{2}}{2}g(X,X)\\
&+g(\xi_{1},\xi_{1})\mathcal{D}(X,X).
\end{split}
\end{equation}
Noting that $\mathcal{M}^{4}$ is homogenous with respect to the symmetric tensor $\mathcal{D}$ in the direction of $X$, then
\begin{equation}\label{3.15}
\begin{split}
\mathcal{D}(X,X)=\beta g(X,X),
\end{split}
\end{equation}
where $\beta$ is a scalar.

In the mixed super quasi-Einstein spacetime, we have
\begin{equation}\label{3.16}
\begin{split}
r=4\Psi_{1}-\Psi_{2}+\Psi_{3},\quad g(\xi_{1},\xi_{1})=-1.
\end{split}
\end{equation}
Substituting {\ref{3.15}} and {\ref{3.16}} into {\ref{3.14}},  we obtain the sectional curvature of the plane determined by two vector $X \in (\xi_{1}, \xi_{2})^\perp$ and $\xi_{1}$ is
\begin{equation*}
\begin{split}
K(X,\xi_{1})=-\displaystyle\frac{{\rm R}(X,\xi_{1},\xi_{1},X)}{g(X,X)}=\Psi_{1}+\frac{\Psi_{3}}{2}+\sigma+\beta.
\end{split}
\end{equation*}
\end{proof}

\subsection{The divergence-free space-matter tensor}\  

In this subsection, we look for sufficient conditions under which the divergence of the space-matter tensor vanishes.
\begin{theorem}
Let the associated scalars and  the energy density in the mixed super quasi-Einstein spacetime satisfying Einstein's field equation without a  cosmological constant be constants. Suppose that  the tensor $\mathcal{D}$ is of  Codazzi type. If the generators $\xi_{1}$ and $\xi_{2}$ are parallel vector fields. Then the space-matter tensor will be divergence-free.
\end{theorem}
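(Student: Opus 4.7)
The plan is to compute $\operatorname{div} P$ slot by slot, reducing everything to derivatives of $T$, and then use the Codazzi condition on $\mathcal{D}$ (transferred to $T$ via Einstein's equation) to collapse the remaining terms. First I would observe that because all the associated scalars $\Psi_1,\dots,\Psi_5$ are constants, the scalar curvature $r=4\Psi_1+\Psi_2+\Psi_3$ from \eqref{2.1} is constant as well, so $dr=0$. Taking the divergence of \eqref{3.1} and invoking the contracted second Bianchi identity $\operatorname{div}(\mathrm{Ric})=\tfrac12\,dr$, one gets $\operatorname{div}(T)=0$ immediately.

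Next, since $\xi_1$ and $\xi_2$ are parallel, the associated $1$-forms satisfy $\nabla\mathcal{A}=\nabla\mathcal{B}=0$. Differentiating \eqref{1.3} while using constancy of the $\Psi_i$'s collapses the Ricci derivative to
\begin{equation*}
(\nabla_Z\mathrm{Ric})(X,Y)=\Psi_5\,(\nabla_Z\mathcal{D})(X,Y),
\end{equation*}
and combining with \eqref{3.1} gives $\kappa(\nabla_ZT)(X,Y)=\Psi_5(\nabla_Z\mathcal{D})(X,Y)$. The assumption that $\mathcal{D}$ is of Codazzi type then transfers directly to $T$: $(\nabla_XT)(Y,Z)=(\nabla_YT)(X,Z)$.

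The third step is to expand $\operatorname{div} P$ for $P=R+\tfrac{\kappa}{2}\,g\wedge T-\sigma G$. Since $\nabla g=0$ and $\sigma$ is constant, $\operatorname{div}(\sigma G)=0$. A short computation using \eqref{3.3} and $\nabla g=0$ gives
\begin{equation*}
\operatorname{div}(g\wedge T)(Y,Z,W)=(\nabla_WT)(Y,Z)-(\nabla_ZT)(Y,W)+g(Y,Z)(\operatorname{div}T)(W)-g(Y,W)(\operatorname{div}T)(Z),
\end{equation*}
and the last two terms vanish by the first step. For $\operatorname{div}R$ I would cite the standard contracted second Bianchi identity,
\begin{equation*}
(\operatorname{div}R)(Y,Z,W)=(\nabla_Z\mathrm{Ric})(Y,W)-(\nabla_W\mathrm{Ric})(Y,Z)=\kappa\bigl[(\nabla_ZT)(Y,W)-(\nabla_WT)(Y,Z)\bigr],
\end{equation*}
the last equality again using that $r$ is constant. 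Assembling the pieces gives $\operatorname{div}P(Y,Z,W)=\tfrac{\kappa}{2}\bigl[(\nabla_ZT)(Y,W)-(\nabla_WT)(Y,Z)\bigr]$.

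Finally, the Codazzi identity for $T$ forces $(\nabla_ZT)(Y,W)=(\nabla_YT)(Z,W)$, while $(\nabla_WT)(Y,Z)=(\nabla_YT)(W,Z)=(\nabla_YT)(Z,W)$ by Codazzi together with the symmetry of $T$; hence the bracket vanishes, giving $\operatorname{div}P=0$. The main obstacle I anticipate is simply bookkeeping: getting the index slot on which the divergence is taken for the Kulkarni--Nomizu-type product $g\wedge T$ consistent with the slot used in the contracted Bianchi formula for $R$, and handling sign conventions so that the two contributions combine to leave a clean antisymmetric Codazzi remainder rather than a stray extra factor.
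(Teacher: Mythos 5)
Your proposal is correct and follows essentially the same route as the paper: both expand $\operatorname{div}{\rm P}$ term by term, invoke the contracted second Bianchi identity, and then kill the remainder using the constancy of the associated scalars and $\sigma$, the parallelism of $\xi_{1},\xi_{2}$, and the Codazzi condition on $\mathcal{D}$. The only cosmetic difference is that you transfer the Codazzi property from $\mathcal{D}$ to ${\rm T}$ (so that $\nabla {\rm T}$ is totally symmetric and all antisymmetrized terms vanish at once), whereas the paper rewrites ${\rm T}$ in terms of ${\rm Ric}$ via Einstein's equation and cancels the $\mathcal{A}$-, $\mathcal{B}$- and $\mathcal{D}$-contributions separately.
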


\begin{proof}
From (\ref{3.1}) and (\ref{3.10}), we obtain
\begin{equation}\label{3.17}
\begin{split}
({\rm div}{\rm P})(X,Y,Z)=&({\rm div}{\rm R})(X,Y,Z)+\displaystyle\frac{1}{2}\big((\nabla_{X}{\rm Ric})(Y,Z)\\
&-(\nabla_{Y}{\rm Ric})(X,Z)\big)-g(Y,Z)\big(\frac{1}{4}X(r)+X(\sigma)\big)\\
&+g(X,Z)\big(\displaystyle\frac{1}{4}Y(r)+Y(\sigma)\big).
\end{split}
\end{equation}
For a pseudo-Riemannian manifold, it is known that
\begin{equation}\label{3.18}
\begin{split}
({\rm div}{\rm R})(X,Y,Z)=\big((\nabla_{X}{\rm Ric})(Y,Z)-(\nabla_{Y}{\rm Ric})(X,Z)\big)
\end{split}
\end{equation}
Combining (\ref{3.17}) and (\ref{3.18}), we have
\begin{equation}\label{3.19}
\begin{split}
({\rm div}{\rm R})(X,Y,Z)=&\displaystyle\frac{3}{2}\big((\nabla_{X}{\rm Ric})(Y,Z)-(\nabla_{Y}{\rm Ric})(X,Z)\big)\\
&-g(Y,Z)\big(\frac{1}{4}X(r)+X(\sigma)\big)\\
&+g(X,Z)\big(\displaystyle\frac{1}{4}Y(r)+Y(\sigma)\big).
\end{split}
\end{equation}
Since the tensor $\mathcal{D}$ is of  Codazzi type, then
\begin{equation}\label{3.20}
\begin{split}
(\nabla_{X}\mathcal{D})(Y,Z)=(\nabla_{Y}\mathcal{D})(X,Z).
\end{split}
\end{equation}
In view of (\ref{1.3}) and (\ref{3.20}), (\ref{3.19}) becomes
\begin{equation}\label{3.21}
\begin{split}
({\rm div}{\rm P})(X,Y,Z)=&\frac{3}{2}\big[\Psi_{2}\big((\nabla_{X}\mathcal{A})(Y)\mathcal{A}(Z)+(\nabla_{\rm X}\mathcal{A})(Z)\mathcal{A}(Y)\\
&-(\nabla_{Y}\mathcal{A})(X)\mathcal{A}(Z)-(\nabla_{Y}\mathcal{A})(Z)\mathcal{A}(X)\big)\\
&+\Psi_{3}\big((\nabla_{X}\mathcal{B})(Y)\mathcal{B}(Z)+(\nabla_{X}\mathcal{B})(Z)\mathcal{B}(Y)\\
&-(\nabla_{Y}\mathcal{B})(X)\mathcal{B}(Z)-(\nabla_{Y}\mathcal{B})(Z)\mathcal{B}(X)\big)\\
&+\Psi_{4}\big((\nabla_{X}\mathcal{A})(Y)\mathcal{B}(Z)+(\nabla_{X}\mathcal{B})(Y)\mathcal{A}(Z)\\
&+(\nabla_{X}\mathcal{A})(Z)\mathcal{B}(Y)+(\nabla_{X}\mathcal{B})(Z)\mathcal{A}(Y)\\
&-(\nabla_{Y}\mathcal{A})(X)\mathcal{B}(Z)-(\nabla_{Y}\mathcal{B})(X)\mathcal{A}(Z)\\
&-(\nabla_{Y}\mathcal{A})(Z)\mathcal{B}(X)-(\nabla_{Y}\mathcal{B})(Z)\mathcal{A}(X)\big)\big]\\
&-g(Y,Z)\big(\frac{1}{4}X(r)+X(\sigma)\big)\\
&+g(X,Z)\big(\displaystyle\frac{1}{4}Y(r)+Y(\sigma)\big).
\end{split}
\end{equation}
Noticing that the associated scalars and the energy density $\sigma$ are constants and the generators  $\xi_{1}$ and $\xi_{2}$ are parallel vector fields, then
\begin{equation*}
\begin{split}
Y(r)=0,\quad Y(\sigma)=0,\quad (\nabla_{X}\mathcal{A})(Y)=0, \quad  (\nabla_{X}\mathcal{B})(Y)=0.
\end{split}
\end{equation*}
Therefore, from (\ref{3.21}), it follows that
\begin{equation*}
\begin{split}
({\rm div}{\rm P})(X,Y,Z)=0.
\end{split}
\end{equation*}
\end{proof}
Supposing that $({\rm div}{\rm P})(X,Y,Z)=0$ and then contracting (\ref{3.14}) over $Y$ and $Z$, we can obtain
\begin{equation*}
\begin{split}
X(\sigma)=0,
\end{split}
\end{equation*}
we conclude that the energy density is a constant. Hence, we have the following corollary.
\begin{corollary}
For a divergence-free space-matter tensor, the energy density in the mixed super quasi-Einstein spacetime obeying Einstein's field equation without a cosmological constant is a constant.
\end{corollary}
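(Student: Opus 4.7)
The plan is to start from the general identity (\ref{3.19}) for $({\rm div}\,{\rm P})(X,Y,Z)$, impose the divergence-free hypothesis, and then contract the resulting $(0,3)$-tensor identity over its last two arguments in a local pseudo-orthonormal frame. The twice-contracted second Bianchi identity will collapse all Ricci-derivative terms into multiples of $X(r)$, and a short arithmetic check will show that the $X(r)$ contributions on the two sides cancel exactly, leaving only a multiple of $X(\sigma)$, which must therefore vanish.

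More concretely, setting $({\rm div}\,{\rm P})(X,Y,Z)=0$ in (\ref{3.19}) rearranges to
\begin{equation*}
\frac{3}{2}\big[(\nabla_{X}{\rm Ric})(Y,Z)-(\nabla_{Y}{\rm Ric})(X,Z)\big] = g(Y,Z)\Big(\frac{1}{4}X(r)+X(\sigma)\Big) - g(X,Z)\Big(\frac{1}{4}Y(r)+Y(\sigma)\Big).
\end{equation*}
I would then pick a local orthonormal frame $\{e_i\}_{i=1}^{4}$ with $g(e_i,e_j)=\epsilon_i\delta_{ij}$ and apply $\sum_i\epsilon_i$ at $Y=Z=e_i$ to both sides. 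The first Ricci-derivative term contributes $X(r)$ because $\nabla$ commutes with the trace, while the second contributes $\frac{1}{2}X(r)$ by the contracted second Bianchi identity $\sum_i\epsilon_i(\nabla_{e_i}{\rm Ric})(X,e_i)=\frac{1}{2}X(r)$. On the right-hand side, $\sum_i\epsilon_i g(e_i,e_i)=4$, and the $g(X,e_i)$ factor collapses via $\sum_i\epsilon_i g(X,e_i)\,e_i = X$.

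Collecting terms, the identity reduces to $\frac{3}{4}X(r)=\frac{3}{4}X(r)+3X(\sigma)$, so that $X(\sigma)=0$ for every vector field $X$, and hence $\sigma$ is constant, as claimed. No serious obstacle is anticipated; the one delicate point is to keep track of the signature factors $\epsilon_i$ carefully in the Lorentzian frame, since a sign slip would spoil the exact cancellation of the $X(r)$ coefficients on which the argument hinges.
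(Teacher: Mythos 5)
Your proposal is correct and follows essentially the same route as the paper: impose $(\operatorname{div}\mathrm{P})=0$ in the divergence formula and contract over $Y$ and $Z$, with the twice-contracted Bianchi identity producing the exact cancellation of the $X(r)$ terms. The paper's own proof is a single sentence citing equation (3.14) (evidently a typo for (3.19)) and omits all of the computation; your version supplies the missing details, and the arithmetic $\tfrac{3}{4}X(r)=\tfrac{3}{4}X(r)+3X(\sigma)$ checks out.
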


\section{Ricci-Bourguignon Solitons on ${\rm M^{n}_{SQE}}$ }

In the section, we first recall the definition of Ricci-Bourguignon solitons (see \cite[Definition 1.1]{ref11}).
\begin{definition}
A quadruplet $(g,U,\lambda,\rho)$ on non-flat Riemannian manifold $(\mathcal{M}^{n},g)$  is a Ricci-Bourguignon soliton if there exist a vector field $U$ and a  constant $\lambda$  such that
\begin{equation}\label{4.1}
\begin{split}
\displaystyle\frac{1}{2}\mathcal{L}_{U}g+{\rm Ric}=(\lambda+\rho r)g,
\end{split}
\end{equation}
where $\mathcal{L}_{U}$ denotes the Lie derivative operator in the direction of the vector field $U$.
\end{definition}

In \cite{ref11}, for different values of $\rho$, the Ricci-Bourguignon soliton can reduce to different types of solitons. Such as
\begin{itemize}
	\item \( \rho=0 \): the soliton is Ricci soliton\cite{ref13};\\
	
	\item \( \rho=\displaystyle\frac{1}{2} \): the soliton is  Einstein soliton;\\
	
	\item \( \rho=\displaystyle\frac{1}{n} \): the soliton is traceless Ricci soliton;\\
	
	\item \( \rho=\displaystyle\frac{1}{2(n-1)} \): the soliton is Schouten soliton\cite{ref24}.
\end{itemize}

A Ricci-Bourguignon soliton is called expanding, steady or shrinking depending on whether $\lambda>0,=0,<0$. Some characteristics of Ricci-Bourguignon solitons have been studied by  Patra\cite{ref21}.

A vector field  $U$ is called  a torse-forming vector field if it satisfies\cite{ref30}
\begin{equation}\label{4.2}
\begin{split}
\nabla_{X}U=fX+\alpha(X)U,
\end{split}
\end{equation}
where $f$ is a function, $\alpha$ is a 1-form.

The conharmonic  curvature tensor  ${\rm \tilde{C}}$ of type $(1,3)$ of an $n$-dimensional Riemannian manifold $(\mathcal{M}^{n},g)$ is defined by\cite{ref16}
\begin{equation*}
\begin{split}
{\rm \tilde{C}}(X,Y)Z=&{\rm R}(X,Y)Z-\displaystyle\frac{1}{n-2}\big({\rm Ric}(Y,Z)X-{\rm Ric}(X,Z)Y+g(Y,Z)QX\\
&-g(X,Z)QY\big),
\end{split}
\end{equation*}
where $Q$ is the Ricci operator defined by ${\rm Ric}(X,Y)=g(QX,Y)$. If ${\rm \tilde{C}}=0$, then the manifold is conharmonic flat.

We consider several geometric properties about a  mixed super quasi-Einstein manifold with Ricci-Bourguignon soliton, then we obtain the following results.
\begin{theorem}
Let $(\mathcal{M}^{n},g)$ be a mixed super quasi-Einstein manifold  with Ricci-Bourguignon soliton $(g,\xi_{1},\lambda,\rho)$, then the integral curves of $\xi_{1}$  are geodesic if and only if the manifold $\mathcal{M}$ is a pseudo generalized quasi-Einstein manifold.
\end{theorem}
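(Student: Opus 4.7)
The plan is to decode the Ricci-Bourguignon soliton equation (\ref{4.1}) by probing it with the generator $\xi_{1}$ and a free argument $Y$. Recall that the integral curves of $\xi_{1}$ being geodesic is equivalent to $\nabla_{\xi_{1}}\xi_{1}=0$, while a pseudo generalized quasi-Einstein manifold is obtained from the mixed super quasi-Einstein structure (\ref{1.3}) by imposing $\Psi_{4}=0$. So the task is to read $\nabla_{\xi_{1}}\xi_{1}$ out of (\ref{4.1}) and show that it is, up to a nonzero multiple, $\Psi_{4}\,\xi_{2}$.

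First I would set $X=\xi_{1}$ in (\ref{4.1}) and expand the Lie derivative as
\[
\tfrac{1}{2}(\mathcal{L}_{\xi_{1}}g)(\xi_{1},Y)=\tfrac{1}{2}g(\nabla_{\xi_{1}}\xi_{1},Y)+\tfrac{1}{2}g(\xi_{1},\nabla_{Y}\xi_{1}).
\]
Since $g(\xi_{1},\xi_{1})=\mathcal{A}(\xi_{1})=1$ is constant, differentiating gives $g(\xi_{1},\nabla_{Y}\xi_{1})=0$, so only the first term survives. Next I would compute ${\rm Ric}(\xi_{1},Y)$ directly from (\ref{1.3}) using $\mathcal{A}(\xi_{1})=1$, $\mathcal{B}(\xi_{1})=0$ and $\mathcal{D}(X,\xi_{1})=0$, obtaining
\[
{\rm Ric}(\xi_{1},Y)=(\Psi_{1}+\Psi_{2})\mathcal{A}(Y)+\Psi_{4}\mathcal{B}(Y).
\]
Substituting these into (\ref{4.1}) with $X=\xi_{1}$ yields
\[
\tfrac{1}{2}g(\nabla_{\xi_{1}}\xi_{1},Y)+(\Psi_{1}+\Psi_{2})\mathcal{A}(Y)+\Psi_{4}\mathcal{B}(Y)=(\lambda+\rho r)\mathcal{A}(Y).
\]

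To pin down the constant $\lambda+\rho r$, I would specialize to $Y=\xi_{1}$. Using $g(\nabla_{\xi_{1}}\xi_{1},\xi_{1})=0$, $\mathcal{A}(\xi_{1})=1$, and $\mathcal{B}(\xi_{1})=0$, this gives $\Psi_{1}+\Psi_{2}=\lambda+\rho r$. Feeding that back into the previous identity collapses the $\mathcal{A}(Y)$ terms and leaves
\[
g(\nabla_{\xi_{1}}\xi_{1},Y)=-2\Psi_{4}\mathcal{B}(Y)=-2\Psi_{4}\,g(Y,\xi_{2}),
\]
for every $Y$, hence $\nabla_{\xi_{1}}\xi_{1}=-2\Psi_{4}\,\xi_{2}$.

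Because $\xi_{2}$ is a nowhere-vanishing unit vector field, the identity $\nabla_{\xi_{1}}\xi_{1}=-2\Psi_{4}\,\xi_{2}$ forces the equivalence: the integral curves of $\xi_{1}$ are geodesic if and only if $\Psi_{4}=0$, i.e.\ the manifold is a pseudo generalized quasi-Einstein manifold. I do not expect any serious obstacle: the only place one must be careful is the vanishing of $g(\xi_{1},\nabla_{Y}\xi_{1})$, which is immediate from $g(\xi_{1},\xi_{1})\equiv1$, and the elimination step that kills the $\mathcal{A}(Y)$-terms via the $Y=\xi_{1}$ specialization.
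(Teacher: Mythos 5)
Your proposal is correct and follows essentially the same route as the paper: substitute $X=\xi_{1}$ into the soliton equation, use $Y=\xi_{1}$ to identify $\lambda+\rho r=\Psi_{1}+\Psi_{2}$, and reduce to $g(\nabla_{\xi_{1}}\xi_{1},Y)=-2\Psi_{4}\mathcal{B}(Y)$. If anything, your final step is cleaner than the paper's, which extracts $\Psi_{4}=0$ by ``putting $Y=\xi_{1}$'' into $-2\Psi_{4}\mathcal{B}(Y)=0$ (an evident typo for $Y=\xi_{2}$, since $\mathcal{B}(\xi_{1})=0$), whereas you correctly read off $\nabla_{\xi_{1}}\xi_{1}=-2\Psi_{4}\xi_{2}$ and use that $\xi_{2}$ is a nowhere-vanishing unit field.
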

\begin{proof}
Since $(g,\xi_{1},\lambda,\rho)$ is a  Ricci-Bourguignon soliton on  $\mathcal{M}$,  then  (\ref{4.1}) becomes
\begin{equation}\label{4.3}
\begin{split}
\mathcal{L}_{\xi_{1}}g(X,Y)&+2\big(\Psi_{1}g(X,Y)+\Psi_{2}\mathcal{A}(X)\mathcal{A}(Y)+\Psi_{3} \mathcal{B}(X)\mathcal{B}(Y)\\
&+\Psi_{4}\big(\mathcal{A}(X)\mathcal{B}(Y)+\mathcal{B}(X)\mathcal{A}(Y)\big)+\Psi_{5}\mathcal{D}(X,Y)\big)\\
=&2(\lambda+\rho r)g(X,Y).
\end{split}
\end{equation}
By setting $X=\xi_{1}$ into (\ref{4.3}), we obtain
\begin{equation}\label{4.4}
\begin{split}
g(\nabla_{\xi_{1}}\xi_{1},Y)+2\big((\Psi_{1}+\Psi_{2})\mathcal{A}(Y)+\Psi_{4} \mathcal{B}(Y)\big)=2(\lambda+\rho r)\mathcal{A}(Y).
\end{split}
\end{equation}
Substituting $Y=\xi_{1}$ into (\ref{4.4}) leads to
\begin{equation}\label{4.5}
\begin{split}
\Psi_{1}+\Psi_{2}=\lambda+\rho  r.
\end{split}
\end{equation}
Consequently, equation (\ref{4.4}) simplifies to
\begin{equation}\label{4.6}
\begin{split}
g(\nabla_{\xi_{1}}\xi_{1},Y)=-2\Psi_{4} \mathcal{B}(Y).
\end{split}
\end{equation}
Supposing that the integral curves of $\xi_{1}$ are geodesic on $\mathcal{M}$, then
{\ref{4.6}} reduces to
\begin{equation}\label{4.7}
\begin{split}
-2\Psi_{4} \mathcal{B}(Y)=0.
\end{split}
\end{equation}
Putting $Y=\xi_{1}$ into (\ref{4.7}) results in $\Psi_{4}=0$. This implies that
$\mathcal{M}$ is a pseudo generalized quasi-Einstein manifold.

Conversely, assuming that $\mathcal{M}$ is a pseudo generalized quasi-Einstein manifold, then  we can get
\begin{equation*}
\begin{split}
g(\nabla_{\xi_{1}}\xi_{1},Y)=0,
\end{split}
\end{equation*}
which means that $\nabla_{\xi_{1}}\xi_{1}=0$.
\end{proof}	

From (\ref{4.5}), we can obtain $\lambda=(1-\rho n)\Psi_{1}+(1-\rho )\Psi_{2}-\rho \Psi_{3}$. Therefore, we have the following corollaries.

\begin{corollary}
Let $(\mathcal{M}^{n},g)$ be a mixed super quasi-Einstein manifold with Ricci soliton, then the soliton is expanding, steady and shrinking  according to $\Psi_{1}+\Psi_{2}>0$, $\Psi_{1}+\Psi_{2}=0$ and $\Psi_{1}+\Psi_{2}<0$.
\end{corollary}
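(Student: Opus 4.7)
The plan is to specialize equation (\ref{4.5}) from the proof of the preceding theorem to the Ricci soliton case, and then read off the sign of $\lambda$ directly from the classification convention recalled after the definition of Ricci-Bourguignon solitons.

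First I would observe that equation (\ref{4.5}), namely $\Psi_{1}+\Psi_{2}=\lambda+\rho r$, was derived solely from the Ricci-Bourguignon soliton identity (\ref{4.1}) with potential vector field $\xi_{1}$ by substituting $X=\xi_{1}$ in (\ref{4.3}) to reach (\ref{4.4}) and then $Y=\xi_{1}$ to reach (\ref{4.5}); the additional hypothesis that the integral curves of $\xi_{1}$ be geodesic is only invoked afterwards in Theorem \ref{theorem}. Consequently (\ref{4.5}) is available in the generality of this corollary. Since a Ricci soliton is precisely the Ricci-Bourguignon soliton with $\rho=0$, as listed in the enumeration of special cases after the definition, setting $\rho=0$ in (\ref{4.5}) collapses it immediately to $\lambda=\Psi_{1}+\Psi_{2}$.

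With $\lambda$ now pinned down as this explicit linear combination of the associated scalars, the trichotomy follows at once from the convention that a Ricci-Bourguignon soliton is expanding, steady, or shrinking according as $\lambda>0$, $\lambda=0$, or $\lambda<0$. There is essentially no obstacle here: the only point that requires a moment's attention is to confirm that (\ref{4.5}) is not conditional on any geometric hypothesis placed on $\xi_{1}$ beyond its role as the soliton potential, so that the specialization $\rho=0$ can be applied unconditionally. Once this is noted, no further computation beyond reading off the sign of $\Psi_{1}+\Psi_{2}$ is required.
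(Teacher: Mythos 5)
Your proposal is correct and matches the paper's own argument: the paper likewise obtains $\lambda=(1-\rho n)\Psi_{1}+(1-\rho)\Psi_{2}-\rho\Psi_{3}$ from (\ref{4.5}) together with $r=n\Psi_{1}+\Psi_{2}+\Psi_{3}$, sets $\rho=0$ to get $\lambda=\Psi_{1}+\Psi_{2}$, and reads off the sign via the stated convention. Your observation that (\ref{4.5}) does not depend on the geodesic hypothesis is accurate and is implicitly what the paper relies on.
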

\begin{corollary}
Let $(\mathcal{M}^{n},g)$ be a mixed super quasi-Einstein manifold  with Einstein soliton, then the soliton is expanding, steady and shrinking  according to $(n-2)\Psi_{1}-\Psi_{2}+\Psi_{3}<0$, $(n-2)\Psi_{1}-\Psi_{2}+\Psi_{3}=0$ and $(n-2)\Psi_{1}-\Psi_{2}+\Psi_{3}>0$.
\end{corollary}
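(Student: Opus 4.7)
The plan is to specialize the general formula for $\lambda$ that appears just before this corollary to the Einstein case $\rho=\tfrac{1}{2}$. Concretely, the proof of the preceding Theorem 4.1 already established the scalar identity $\Psi_{1}+\Psi_{2}=\lambda+\rho r$ (equation (\ref{4.5})), obtained by inserting $X=Y=\xi_{1}$ in the soliton equation. Combined with the trace identity (\ref{2.1}), namely $r=n\Psi_{1}+\Psi_{2}+\Psi_{3}$, this yields
\begin{equation*}
\lambda=(1-\rho n)\Psi_{1}+(1-\rho)\Psi_{2}-\rho\Psi_{3},
\end{equation*}
which is exactly the formula stated in the remark between Theorem 4.1 and the corollary.

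Next, I would substitute $\rho=\tfrac{1}{2}$, which is the value characterizing the Einstein soliton according to the list in Section 4. A direct simplification gives
\begin{equation*}
\lambda=\frac{2-n}{2}\Psi_{1}+\frac{1}{2}\Psi_{2}-\frac{1}{2}\Psi_{3}=-\frac{1}{2}\bigl[(n-2)\Psi_{1}-\Psi_{2}+\Psi_{3}\bigr].
\end{equation*}
Because of the factor $-\tfrac{1}{2}$, the sign of $\lambda$ is the opposite of the sign of $(n-2)\Psi_{1}-\Psi_{2}+\Psi_{3}$. Appealing to the standard convention recalled earlier in the section (\emph{expanding} if $\lambda>0$, \emph{steady} if $\lambda=0$, \emph{shrinking} if $\lambda<0$) then delivers the three equivalences stated in the corollary.

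There is essentially no obstacle here; the result is a one-line specialization of the general formula for $\lambda$, combined with a scalar curvature substitution. The only care needed is to track the sign reversal introduced by the factor $-\tfrac{1}{2}$, which is what transforms $\lambda>0$ into the condition $(n-2)\Psi_{1}-\Psi_{2}+\Psi_{3}<0$ and vice versa. No further geometric input (torse-forming condition, tensor $\mathcal{D}$, etc.) is required for this corollary, since the relation between $\lambda$ and the associated scalars depends only on tracing the soliton equation.
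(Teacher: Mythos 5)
Your proposal is correct and follows exactly the paper's own (implicit) argument: the remark preceding the corollaries derives $\lambda=(1-\rho n)\Psi_{1}+(1-\rho)\Psi_{2}-\rho\Psi_{3}$ from (\ref{4.5}) and (\ref{2.1}), and the corollary is the specialization $\rho=\tfrac{1}{2}$ with the sign reversal you note. Nothing is missing.
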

\begin{corollary}
Let $(\mathcal{M}^{n},g)$ be a mixed super quasi-Einstein manifold  with traceless Ricci soliton, then the soliton is expanding, steady and shrinking  according to $(n-1)\Psi_{2}-\Psi_{3}>0$, $(n-1)\Psi_{2}-\Psi_{3}=0$ and $(n-1)\Psi_{2}-\Psi_{3}<0$.
\end{corollary}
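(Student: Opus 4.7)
The plan is to directly specialize the scalar identity derived in Theorem~4.2 to the case $\rho=\tfrac{1}{n}$, which by definition characterizes a traceless Ricci soliton. Recall that the remark immediately following the proof of Theorem~4.2---obtained by combining equation~(\ref{4.5}) with~(\ref{2.1})---yields the closed-form expression
\[
\lambda=(1-\rho n)\Psi_{1}+(1-\rho)\Psi_{2}-\rho\,\Psi_{3}.
\]
This already contains all the geometric input; what remains is purely algebraic.

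First I would substitute $\rho=\tfrac{1}{n}$ into this formula. The coefficient of $\Psi_{1}$ vanishes since $1-\rho n=0$, and a short simplification gives
\[
\lambda=\frac{n-1}{n}\,\Psi_{2}-\frac{1}{n}\,\Psi_{3}=\frac{1}{n}\bigl[(n-1)\Psi_{2}-\Psi_{3}\bigr].
\]
Because $n\geq 3$, the prefactor $\tfrac{1}{n}$ is strictly positive, so $\lambda$ and $(n-1)\Psi_{2}-\Psi_{3}$ share the same sign.

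Finally, invoking the convention introduced immediately after Definition of the Ricci--Bourguignon soliton---that the soliton is expanding, steady, or shrinking according as $\lambda>0$, $\lambda=0$, or $\lambda<0$---the three cases of the corollary follow at once. There is no substantive obstacle: the argument is a one-line algebraic specialization of an already-derived formula. The only point that requires attention is keeping track of signs and the factor of $\tfrac{1}{n}$ so that the three regimes for $(n-1)\Psi_{2}-\Psi_{3}$ are matched with the correct regime of the soliton.
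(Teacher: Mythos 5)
Your proposal is correct and follows exactly the route the paper intends: the remark after Theorem~4.2 gives $\lambda=(1-\rho n)\Psi_{1}+(1-\rho)\Psi_{2}-\rho\Psi_{3}$, and setting $\rho=\tfrac{1}{n}$ yields $\lambda=\tfrac{1}{n}\bigl[(n-1)\Psi_{2}-\Psi_{3}\bigr]$, so the sign of $\lambda$ matches that of $(n-1)\Psi_{2}-\Psi_{3}$. Your sign bookkeeping is consistent with the paper's convention (expanding for $\lambda>0$), so nothing is missing.
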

\begin{corollary}
Let $(\mathcal{M}^{n},g)$ be a mixed super quasi-Einstein manifold  with Schouten soliton, then the soliton is expanding, steady and shrinking  according to $(n-2)\Psi_{1}+(2n-3)\Psi_{2}-\Psi_{3}>0$, $(n-2)\Psi_{1}+(2n-3)\Psi_{2}-\Psi_{3}=0$
and $(n-2)\Psi_{1}+(2n-3)\Psi_{2}-\Psi_{3}<0$.
\end{corollary}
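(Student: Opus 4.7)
The plan is to specialize the identity $\lambda = (1-\rho n)\Psi_{1} + (1-\rho)\Psi_{2} - \rho\Psi_{3}$, which was extracted immediately after the proof of the previous theorem by combining (\ref{2.1}) with (\ref{4.5}), to the Schouten value of the parameter. Concretely, I would set $\rho = \frac{1}{2(n-1)}$ --- the constant that distinguishes the Schouten soliton in the list of examples following Definition~4.1 --- and then read off the sign of $\lambda$.

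Substituting gives
\[
\lambda = \left(1 - \frac{n}{2(n-1)}\right)\Psi_{1} + \left(1 - \frac{1}{2(n-1)}\right)\Psi_{2} - \frac{1}{2(n-1)}\Psi_{3}.
\]
Using the elementary identities $1 - \frac{n}{2(n-1)} = \frac{n-2}{2(n-1)}$ and $1 - \frac{1}{2(n-1)} = \frac{2n-3}{2(n-1)}$ to clear the common denominator collapses this to
\[
\lambda = \frac{(n-2)\Psi_{1} + (2n-3)\Psi_{2} - \Psi_{3}}{2(n-1)}.
\]
Because $n \geq 3$ for a mixed super quasi-Einstein manifold, the denominator $2(n-1)$ is strictly positive, so the sign of $\lambda$ agrees with the sign of the numerator $(n-2)\Psi_{1} + (2n-3)\Psi_{2} - \Psi_{3}$. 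Invoking the sign convention recorded just after Definition~4.1 --- expanding, steady, or shrinking according to $\lambda > 0$, $\lambda = 0$, or $\lambda < 0$ --- then yields the stated trichotomy.

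There is no substantive obstacle here: the corollary is a purely mechanical specialization of a formula already available in the excerpt, exactly parallel to the sketches supporting Corollaries~4.3--4.5 at $\rho = 0$, $\rho = \tfrac{1}{2}$, and $\rho = \tfrac{1}{n}$. The only point that merits a moment of care is verifying that $2(n-1) > 0$, which ensures that the sign is preserved rather than flipped when passing from $\lambda$ to the numerator; for this reason the hypothesis $n \geq 3$ built into the definition of a mixed super quasi-Einstein manifold is exactly what the argument needs.
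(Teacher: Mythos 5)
Your proposal is correct and coincides with the paper's (implicit) argument: the authors obtain these corollaries precisely by substituting the relevant value of $\rho$ into $\lambda=(1-\rho n)\Psi_{1}+(1-\rho)\Psi_{2}-\rho\Psi_{3}$, which follows from (\ref{4.5}) and (\ref{2.1}). Your algebra, including the observation that $2(n-1)>0$ preserves the sign, checks out and matches the stated trichotomy.
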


\begin{theorem}
Let $(\mathcal{M}^{n},g)$  be a  mixed super quasi-Einstein manifold  with Ricci-Bourguignon soliton $(g,\xi_{1},\lambda,\rho)$. If the generator $\xi_{1}$  is torse-forming vector field, then the  manifold $\mathcal{M}$ is a pseudo generalized quasi-Einstein manifold and $\mathcal{D}(\xi_{2},\xi_{2})$ is an eigenvalue of  $\mathcal{D}$ corresponding to the eigenvector $\xi_{2}$.
\end{theorem}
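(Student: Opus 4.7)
The plan is to run the Ricci-Bourguignon soliton equation (\ref{4.3}) in tandem with the torse-forming structure on $\xi_1$. First I would expand the Lie derivative $\mathcal{L}_{\xi_1}g$ using $\nabla_X\xi_1 = fX + \alpha(X)\xi_1$ to obtain $(\mathcal{L}_{\xi_1}g)(X,Y) = 2fg(X,Y) + \alpha(X)\mathcal{A}(Y) + \alpha(Y)\mathcal{A}(X)$, and rewrite (\ref{4.3}) as a pointwise tensor identity in which every ingredient is explicit.

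The first goal is to show $\Psi_4 = 0$. For this I would invoke the identity (\ref{4.6}) from the preceding theorem, namely $g(\nabla_{\xi_1}\xi_1, Y) = -2\Psi_4\,\mathcal{B}(Y)$, whose derivation uses only the soliton equation and is therefore still available here. The torse-forming condition forces $\nabla_{\xi_1}\xi_1 = (f+\alpha(\xi_1))\xi_1$, hence $g(\nabla_{\xi_1}\xi_1,Y) = (f+\alpha(\xi_1))\,\mathcal{A}(Y)$. Equating the two expressions and evaluating at $Y = \xi_2$ (so that the $\mathcal{A}$-side vanishes while $\mathcal{B}(\xi_2)=1$) immediately forces $\Psi_4 = 0$, so $\mathcal{M}$ reduces to a pseudo generalized quasi-Einstein manifold.

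For the eigenvalue claim I would substitute $X = \xi_2$ in the expanded soliton identity, now knowing $\Psi_4 = 0$. A preparatory observation is that the $X=\xi_1$, $Y=\xi_2$ specialisation of (\ref{4.3}) gives $\alpha(\xi_2) = -2\Psi_4$, which vanishes in our situation. Using this together with $\mathcal{A}(\xi_2)=0$, $\mathcal{B}(\xi_2)=1$, and $\mathcal{D}(X,\xi_1)=0$ collapses the resulting equation to $\Psi_5\,\mathcal{D}(\xi_2,Y) + (f+\Psi_1+\Psi_3)\,\mathcal{B}(Y) = (\lambda+\rho r)\,\mathcal{B}(Y)$. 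Evaluating at $Y=\xi_2$ identifies the scalar difference $(\lambda+\rho r)-(f+\Psi_1+\Psi_3)$ as $\Psi_5\,\mathcal{D}(\xi_2,\xi_2)$; feeding this back and dividing by $\Psi_5 \neq 0$ yields $\mathcal{D}(\xi_2,Y) = \mathcal{D}(\xi_2,\xi_2)\, g(\xi_2,Y)$ for all $Y$, i.e.\ $\mathcal{D}(\xi_2,\xi_2)$ is an eigenvalue of $\mathcal{D}$ corresponding to the eigenvector $\xi_2$.

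The main obstacle is bookkeeping rather than depth: one must carry the torse-forming data $(f,\alpha)$ through (\ref{4.3}) and eliminate the auxiliary scalars $f+\alpha(\xi_1)$ and $\alpha(\xi_2)$ by the distinguished substitutions $X,Y\in\{\xi_1,\xi_2\}$ in the right order. Routing the argument through (\ref{4.6}) first is what makes the cancellation $\Psi_4 = 0$ transparent; once that is secured, both conclusions drop out from a single further specialisation of the soliton identity.
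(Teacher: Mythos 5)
Your proposal is correct and follows essentially the same route as the paper: expand $\mathcal{L}_{\xi_1}g$ via the torse-forming condition, substitute into the soliton equation, and specialise at $X,Y\in\{\xi_1,\xi_2\}$ to extract first $\Psi_4=0$ and then $\mathcal{D}(\xi_2,Y)=\mathcal{D}(\xi_2,\xi_2)\,g(\xi_2,Y)$. The only cosmetic differences are that the paper first derives $\alpha=-f\mathcal{A}$ (hence $\nabla_{\xi_1}\xi_1=0$ and $\alpha(\xi_2)=0$) from $g(\xi_1,\xi_1)=1$ rather than recycling equation (\ref{4.6}) and the soliton identity, and that it computes $f$ explicitly where you eliminate the scalar $(\lambda+\rho r)-(f+\Psi_1+\Psi_3)$ directly; both are valid.
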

\begin{proof}
Since $\xi_{1}$ is torse-forming vector field, then we have
\begin{equation}\label{4.8}
\begin{split}
\nabla_{X}\xi_{1}=fX+\alpha(X)\xi_{1}.
\end{split}
\end{equation}
Taking the inner product of(\ref{4.8}) with $\xi_{1}$, we can write
\begin{equation}\label{4.9}
\begin{split}
\alpha(X)=-f\mathcal{A}(X).
\end{split}
\end{equation}
In view of (\ref{4.8}) and (\ref{4.9}), then
\begin{equation}\label{4.10}
\begin{split}
\nabla_{X}\xi_{1}=f\big(X-\mathcal{A}(X)\xi_{1}\big).
\end{split}
\end{equation}
Taking $X=\xi_{1}$ in (\ref{4.10}), we can arrive at
\begin{equation}\label{4.11}
\begin{split}
\nabla_{\xi_{1}}\xi_{1}=0.
\end{split}
\end{equation}
Noting that $(g,\xi_{1},\lambda,\rho)$ is a Ricci-Bourguignon soliton on $\mathcal{M}$, thus
\begin{equation}\label{4.12}
\begin{split}
\frac{1}{2}&\big(g(\nabla_{X}\xi_{1},Y)+g(X,\nabla_{Y}\xi_{1})\big)+\Psi_{1}g(X,Y)+\Psi_{2} \mathcal{A}(X)\mathcal{A}(Y)\\
&+\Psi_{3} \mathcal{B}(X)\mathcal{B}(Y)+\Psi_{4} \big(\mathcal{A}(X)\mathcal{B}(Y)+\mathcal{B}(X)\mathcal{A}(Y)\big)+\Psi_{5} \mathcal{D}(X,Y)\\
=&(\lambda+\rho r)g(X,Y).
\end{split}
\end{equation}
Substituting $X=\xi_{1}$ in (\ref{4.12}), we can obtain
\begin{equation}\label{4.13}
\begin{split}
\frac{1}{2}g(\nabla_{\xi_{1}}\xi_{1},Y)+(\Psi_{1}+\Psi_{2})\mathcal{A}(Y)+\Psi_{4} \mathcal{B}(Y)=(\lambda+\rho r)\mathcal{A}(Y).
\end{split}
\end{equation}
Combining (\ref{4.11}) and (\ref{4.13}) yields
\begin{equation}\label{4.14}
\begin{split}
(\Psi_{1}+\Psi_{2})\mathcal{A}(Y)+\Psi_{4} \mathcal{B}(Y)=(\lambda+\rho r)\mathcal{A}(Y).
\end{split}
\end{equation}
Taking $Y=\xi_{1}$ into (\ref{4.14}), we can get
\begin{equation}\label{4.15}
\begin{split}
\Psi_{1}+\Psi_{2}=\lambda+\rho r.
\end{split}
\end{equation}
Again, putting $Y=\xi_{2}$ into (\ref{4.14} gives
\begin{equation}\label{4.16}
\begin{split}
\Psi_{4}=0,
\end{split}
\end{equation}
which implies that $\mathcal{M}$ is a pseudo generalized quasi-Einstein manifold.

Using (\ref{4.10}) and (\ref{4.16}) in (\ref{4.12}), we can write
\begin{equation}\label{4.17}
\begin{split}
(\Psi_{1}+f)g(X,Y)&+(\Psi_{2}-f) \mathcal{A}(X)\mathcal{A}(Y)+\Psi_{3} \mathcal{B}(X)\mathcal{B}(Y)+\Psi_{4} \big(\mathcal{A}(X)\mathcal{B}(Y)\\
&+\mathcal{B}(X)\mathcal{A}(Y)\big)+\Psi_{5} \mathcal{D}(X,Y)=(\lambda+\rho r)g(X,Y).
\end{split}
\end{equation}
Substituting $Y=\xi_{2}$ into (\ref{4.17}), then
\begin{equation}\label{4.18}
\begin{split}
(\Psi_{1}+f+\Psi_{3})\mathcal{B}(X)+\Psi_{5} \mathcal{D}(X,\xi_{2})=(\lambda+\rho r)\mathcal{B}(X).
\end{split}
\end{equation}
Taking $Y=\xi_{2}$ into (\ref{4.18}), we obtain
\begin{equation}\label{4.19}
\begin{split}
(\Psi_{1}+f+\Psi_{3})+\Psi_{5} \mathcal{D}(\xi_{2},\xi_{2})=(\lambda+\rho r).
\end{split}
\end{equation}
From (\ref{4.15}) and (\ref{4.19}), we have
\begin{equation}\label{4.20}
\begin{split}
f=\Psi_{2}-\Psi_{3}-\Psi_{5}\mathcal{D}(\xi_{2},\xi_{2}).
\end{split}
\end{equation}
In view of (\ref{4.15}) and (\ref{4.20}), (\ref{4.18}) becomes
\begin{equation*}
\begin{split}
\mathcal{D}(X,\xi_{2})=\mathcal{D}(\xi_{2},\xi_{2})\mathcal{B}(X)=\mathcal{D}(\xi_{2},\xi_{2})g (X,\xi_{2}),
\end{split}
\end{equation*}
which means that $\mathcal{D}(\xi_{2},\xi_{2})$ is an eigenvalue of  $\mathcal{D}$ corresponding to the eigenvector $\xi_{2}$.
\end{proof}
\begin{theorem}
Let $(\mathcal{M}^{n},g)$ be a conharmonically flat mixed super quasi-Einstein manifold with Ricci-Bourguignon soliton $(g,\xi_{1},\lambda,\rho)$. Then $n\Psi_{1}+\Psi_{2}+\Psi_{3}=0$ and the soliton is steady if and only if the generator $\xi_{1}$ is divergence-free.
\end{theorem}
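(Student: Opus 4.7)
The plan is to exploit the conharmonic flatness to force the scalar curvature to vanish, then read off the first assertion from identity \eqref{2.1}, and finally trace the Ricci--Bourguignon soliton equation \eqref{4.1} to link $\lambda$ with $\mathrm{div}\,\xi_{1}$.

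First I would write out the conharmonic flatness $\tilde{\mathrm{C}}=0$ as
\begin{equation*}
{\rm R}(X,Y,Z,W)=\frac{1}{n-2}\bigl({\rm Ric}(Y,Z)g(X,W)-{\rm Ric}(X,Z)g(Y,W)+g(Y,Z){\rm Ric}(X,W)-g(X,Z){\rm Ric}(Y,W)\bigr),
\end{equation*}
and contract with $X=W=e_{i}$ over an orthonormal frame. The left side yields ${\rm Ric}(Y,Z)$, while on the right the four terms contribute $n{\rm Ric}(Y,Z)$, $-{\rm Ric}(Y,Z)$, $rg(Y,Z)$, $-{\rm Ric}(Y,Z)$ respectively. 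Rearranging gives $(n-2){\rm Ric}(Y,Z)=(n-2){\rm Ric}(Y,Z)+rg(Y,Z)$, hence $r=0$. Combining with the standard trace identity \eqref{2.1} immediately produces $n\Psi_{1}+\Psi_{2}+\Psi_{3}=0$, settling the first claim.

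Next, substituting $r=0$ into the soliton equation \eqref{4.1} reduces it to $\tfrac{1}{2}\mathcal{L}_{\xi_{1}}g+{\rm Ric}=\lambda g$. Taking the trace with respect to an orthonormal frame $\{e_{i}\}$ uses the identity $\mathrm{tr}(\mathcal{L}_{\xi_{1}}g)=2\,\mathrm{div}\,\xi_{1}$, yielding $\mathrm{div}\,\xi_{1}+r=n\lambda$. Since $r=0$, this collapses to $\mathrm{div}\,\xi_{1}=n\lambda$, and with $n\geq 3$ we conclude $\lambda=0\Longleftrightarrow\mathrm{div}\,\xi_{1}=0$, which is precisely the steady condition.

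I do not anticipate a serious obstacle: the entire argument is a two-step contraction followed by tracing the soliton equation. The only subtlety is being careful with the four-term contraction of the conharmonic tensor and with the sign/normalization of $\mathrm{tr}(\mathcal{L}_{\xi_{1}}g)$; both are routine but must be done precisely to land cleanly on $r=0$ and on $\mathrm{div}\,\xi_{1}=n\lambda$.
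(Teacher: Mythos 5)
Your proposal is correct and follows essentially the same route as the paper: contract the conharmonic flatness identity to obtain $r=0$, invoke the trace formula \eqref{2.1} to get $n\Psi_{1}+\Psi_{2}+\Psi_{3}=0$, and then trace the reduced soliton equation to arrive at $\operatorname{div}\xi_{1}=n\lambda$, from which steadiness is equivalent to $\xi_{1}$ being divergence-free. The only difference is that you spell out the four-term contraction and the identity $\operatorname{tr}(\mathcal{L}_{\xi_{1}}g)=2\operatorname{div}\xi_{1}$ explicitly, which the paper leaves implicit.
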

\begin{proof}
Since $\mathcal{M}$ is conharmonically flat, then
\begin{equation}\label{4.21}
\begin{split}
{\rm R}(X,Y,Z,W)=&\displaystyle\frac{1}{n-2}\big({\rm Ric}(Y,Z)g(X,W)-{\rm Ric}(X,Z)g(Y,W)\\
&+g(Y,Z){\rm Ric}(X,W)-g(X,Z){\rm Ric}(Y,W)\big).
\end{split}
\end{equation}
Contracting $X$ and $W$ in (\ref{4.21}) leads to
\begin{equation}\label{4.22}
\begin{split}
r=0,
\end{split}
\end{equation}
which implies that  $n\Psi_{1}+\Psi_{2}+\Psi_{3}=0.$

Combining (\ref{4.1}) and  (\ref{4.22}), one obtains
\begin{equation}\label{4.23}
\begin{split}
\displaystyle\frac{1}{2}\mathcal{L}_{\xi_{1}}g(X,Y)+{\rm Ric}(X,Y)=\lambda g(X,Y).
\end{split}
\end{equation}
Contracting $X$ and $Y$ in (\ref{4.23}) yields
\begin{equation*}
\begin{split}
{\rm div}\xi_{1}=n\lambda.
\end{split}
\end{equation*}
This implies that ${\rm div}\xi_{1}=0\Leftrightarrow\lambda=0$.
\end{proof}

\section{Example of ${\rm M^{4}_{SQE}}$ Spacetime }

In the section, we give a non-trivaial example of  mixed super quasi-Einstein  spacetime to ensure its existence.

\begin{example} Considering a Lorentzian metric $g$  in 4-dimensional space $\mathbb{R}^{4}$ on $\mathcal{M}^{4}$ is given by
\begin{equation*}
\begin{split}
ds^{2}=g_{ij}dx^{i}dx^{j}=(dx^{1})^{2}+2{(x^{1})^{2}}(dx^{2})^{2}+2(x^{2})^{2}(dx^{3})^{2}-(dx^{4})^{2},
\end{split}
\end{equation*}
where $i,j=1,2,3,4$. The non-vanishing components of Christoffel symbols are
\begin{equation*}
\begin{split}
\Gamma_{22}^{1}=-2x^{1}, \quad \Gamma_{33}^{2}=-\frac{x^{2}}{{(x^{1})^{2}}},\quad \Gamma_{12}^{2}=\frac{1}{x^{1}}, \quad \Gamma_{23}^{3}=\frac{1}{x^{2}}.
\end{split}
\end{equation*}
Also, the nonzero curvature tensor is given by
\begin{equation*}
\begin{split}
{\rm R_{1332}}=-\frac{2x^{2}}{x^{1}}.
\end{split}
\end{equation*}
Thus the nonzero Ricci tensor is
\begin{equation*}
\begin{split}
{\rm Ric_{12}}=-\frac{1}{x^{1}x^{2}}.
\end{split}
\end{equation*}
Now, we consider the associated scalars and the symmetric tensor $\mathcal{D}$ as follows
\begin{equation*}
\begin{split}
\Psi_{1}=\frac{3}{4}e^{x^{1}},\quad \Psi_{2}=2e^{x^{1}}, \quad \Psi_{3}=-e^{x^{1}}, \quad \Psi_{4}=-x^{1}, \quad \Psi_{5}=-\frac{1}{(x^{1})^{2}}
\end{split}
\end{equation*}
and $$\mathcal{D}_{ij}=\left\{
\begin{aligned}
1,\qquad                   &if\quad i=j=1,3,\\
-2,\qquad                  &if\quad i=j=2,\\
\frac{x^{1}}{x^{2}},\qquad &if\quad i=1, j=2,\\
0,\qquad                   &otherwise.
\end{aligned}
\right.$$
Let the 1-forms are defined by
$$\mathcal{A}_{i}(x)=\left\{
\begin{aligned}
1,\qquad                  &if\quad i=4,\\
0,\qquad                  &if\quad i=1,2,3\\
\end{aligned}
\right.
\quad and \quad
\mathcal{B}_{i}(x)=\left\{
\begin{aligned}
x^{1},\qquad               &if\quad i=2,\\
x^{2},\qquad               &if\quad i=3,\\
0,\qquad                   &if\quad i=1,4.\\
\end{aligned}
\right.$$
Thus
\begin{equation}\label{5.1}
\begin{split}
{\rm Ric_{12}}=\Psi_{1}g_{12}+\Psi_{2}\mathcal{A}_{1}\mathcal{A}_{2}+\Psi_{3}\mathcal{B}_{1}\mathcal{B}_{2}+\Psi_{4}(\mathcal{A}_{1}\mathcal{B}_{2}+\mathcal{B}_{1}\mathcal{A}_{2})+\Psi_{5}\mathcal{D}_{12}.
\end{split}
\end{equation}
It can be easily seen that (\ref{5.1}) is true and the trace of the tensor $\mathcal{D}$ is zero. In particular, the 1-forms $\mathcal{A}$ and $\mathcal{B}$ follow the condition
\begin{equation*}
\begin{split}
g^{ij}\mathcal{A}_{i}\mathcal{A}_{j}=-1, \quad g^{ij}\mathcal{B}_{i}\mathcal{B}_{j}=1, \quad g^{ij}\mathcal{A}_{i}\mathcal{B}_{j}=0.
\end{split}
\end{equation*}\\
Therefore, the manifold is a mixed super quasi-Einstein spacetime.
\end{example}

\vskip 1cm

\section*{Acknowledgments}

This work was supported in part by the National Natural Science Foundation of
China (12061014), and in part by Guangxi Natural Science Foundation (2025GXNSFAA069316).

\vskip 1cm

\clearpage
\end{CJK*}
\end{document}